\newcommand{\R}{\mathbb R}
\newcommand{\N}{\mathbb N}
\newcommand{\E}{\mathbb E}
\newcommand{\Pro}{\mathbb P}
\newcommand{\Var}{\mathrm{Var}}
\newcommand{\vol}{\mathrm{vol}}
\def\dint{\textup{d}}
\newcommand{\SSS}{\ensuremath{{\mathbb S}}}
\newcommand{\B}{\ensuremath{{\mathbb B}}}
\newcommand{\D}{\ensuremath{{\mathbb D}}}
\DeclareMathOperator{\id}{id}
\newtheorem{thm}{Theorem}[section]
\newtheorem{lemma}[thm]{Lemma}
\newtheorem{proposition}[thm]{Proposition}
\newtheorem{thmalpha}{Theorem}
\theoremstyle{definition}
\newtheorem{rmk}[thm]{Remark}
\begin{document}

\title{\bf The maximum entropy principle and volumetric properties of Orlicz balls}

\medskip

\author{Zakhar Kabluchko and Joscha Prochno}



\date{}

\maketitle

\begin{abstract}
\small
We study the precise asymptotic volume of balls in Orlicz spaces and show that the volume of the intersection of two Orlicz balls undergoes a phase transition when the dimension of the ambient space tends to infinity. This generalizes a result of Schechtman and Schmuckenschl\"ager [GAFA, Lecture notes in Math. 1469 (1991), 174--178] for $\ell_p^d$-balls. As another application, we determine the precise asymptotic volume ratio for $2$-concave Orlicz spaces $\ell_M^d$. Our method rests on ideas from statistical mechanics and large deviations theory, more precisely the maximum entropy or Gibbs principle for non-interacting particles, and presents a natural approach and fresh perspective to such geometric and volumetric questions. In particular, our approach explains how the $p$-generalized Gaussian distribution occurs in problems related to the geometry of $\ell_p^d$-balls, which are Orlicz balls when the Orlicz function is $M(t) = |t|^p$.
\medspace
\vskip 1mm
\noindent{\bf Keywords}. {Central limit theorem, Gibbs measures, maximum entropy principle, Orlicz spaces, sharp large deviations, threshold phenomenon, volume ratio.}\\
{\bf MSC}. Primary 46B06, 52A23, 60F10; Secondary 46B45, 60F05, 94A17
\end{abstract}


\section{Introduction and main results}

Let $d\in \N$, $1\leq r \leq \infty$, and denote by $\D_r^d$ the volume normalized ball in the space $\ell_r^d$. In \cite{SS1991}, Schechtman and Schmuckenschl\"ager studied the asymptotic behavior of the volume of the intersection of a volume normalized ball $\D_p^d$ with a $t$-multiple of a volume normalized ball $\D_q^d$ as the dimension of the ambient space tends to infinity, where $0<p\leq \infty$ and $0<q< \infty$. What they discovered is a threshold phenomenon which says that, for all $t>0$,
\begin{align}\label{thm:schechtman-schmuckenschlaeger}
\vol_d\big(\D_p^d \cap t\D_q^d\big)
\stackrel{d\to\infty}{\longrightarrow}
\begin{cases}
0 & :\, t A_{p,q}<1 \\
1 & :\, t A_{p,q}>1,
\end{cases}
\end{align}
where
\[
A_{p,q} =  \begin{cases}
\frac {\Gamma(1+{1\over p})^{1+{1/q}}}{\Gamma(1+{1\over q})\Gamma({q+1\over p})^{1/q}}\,e^{{1/p}-{1/q}}\,\big({p\over q}\big)^{1/q} &: p<\infty\\
\Gamma(1+{1\over q})^{-1} \big(\frac {q+1}{qe}\big)^{1/q} &: p=\infty.
\end{cases}
\]
The proof heavily rests on a probabilistic representation going back independently to Schechtman and Zinn \cite{SchechtmanZinn} and Rachev and R\"uschendorf \cite{RR1991}. This representation says that the uniform distribution on an $\ell_r^d$-ball can be obtained by considering a sequence $Z_1,\dots,Z_d$ of independent $r$-generalized Gaussians having Lebesgue density
\[
x\mapsto \frac{1}{2r^{1/r}\Gamma(1+1/r)}e^{-|x|^r/r}
\]
and letting
\begin{align}\label{eq:probabilistic representation}
X& := U^{1/d}\frac{(Z_1,\dots,Z_d)}{\|(Z_1,\dots,Z_d)\|_r}
\end{align}
with $U$ uniformly distributed on $[0,1]$ and independent of the $Z_i$'s. The volume of the intersection of balls may be written as the probability that the $\ell_q$-norm of a point uniformly distributed in $\D_p^d$ is bounded above by $tr_q$, where $r_q:=r_q(d)$ is the radius of $\D_q^d$.
So instead of working with a random vector with dependent coordinates (at least when $r<\infty$) the probabilistic representation of the uniform distribution allows one to go over to a random vector with independent ones. The next key ingredients in the proof of the phase transition in \eqref{thm:schechtman-schmuckenschlaeger} are the law of large numbers and the knowledge of the precise asymptotic volumes of $\ell_r^d$-balls. The latter are known at least since Dirichlet \cite{D1839}. For a better understanding, let us briefly sketch the proof for the simple case where $p=\infty$ and $0<q<\infty$. Consider a random vector $Z=(Z_1,\dots,Z_d)$ uniformly distributed on $[-1/2,1/2]^d$, i.e., $Z$ has independent coordinates uniformly distributed on $[-1/2,1/2]$. Then the volume of the intersection can be rewritten as follows,
\[
\vol_d\big(\D_p^d \cap t\D_q^d\big) = \Pro\big[\|Z\|_q \leq t r_q\big] = \Pro\Bigg[\Big(\frac{1}{d}\sum_{i=1}^d|Z_i|^q\Big)^{1/q} \leq t \frac{r_q}{d^{1/q}}\Bigg].
\]
It is then just left to observe that as the dimension tends to infinity, by Stirling's formula $r_q/d^{1/q}$ converges to some explicit number while by the strong law of large numbers the empirical average converges to the expectation of $|Z_1|^q$, which can be computed explicitly.

An inspection of the proof shows that a law of large numbers is however not enough to determine the asymptotic behavior in \eqref{thm:schechtman-schmuckenschlaeger} at the threshold $tA_{p,q}=1$. This problem remained open for a decade until resolved by Schmuckenschl\"ager \cite{Schmu2001} proving a central limit theorem for $\ell_q$-norms of points chosen uniformly at random in $\ell_p^d$-balls. How a central limit theorem helps to answer this question can be seen rather easily in the simple case $p=\infty$ and $0<q<\infty$, where after a different normalization than above the classical central limit theorem gives the answer. More precisely, one sees that when $tA_{p,q}=1$ the limit in \eqref{thm:schechtman-schmuckenschlaeger} is equal to $1/2$. In \cite{KPT2019_I,KPT2019_II}, Kabluchko, Prochno, and Th\"ale extended the previous results in various directions. They proved a multivariate central limit theorem for $\ell_q$-norms of random vectors in $\ell_p^d$-balls and beyond those Gaussian fluctuations they also determined the moderate and large deviations behavior. Applications of those results include an asymptotic version of a result of Schechtman and Zinn \cite[Subsection 2.5]{KPT2019_I}, a demonstration that in the critical case arbitrary limits in $(0,1)$ can occur \cite[Corollary 2.2]{KPT2019_I}, a result on the volume of intersections of neighboring and multiple balls \cite[Corollary 2.3]{KPT2019_I}, where the answer in the critical case is \emph{not} $2^{-d}$ as may be expected, a comparison of random and non-random projections of $\ell_p^d$-balls to lower-dimensional subspaces \cite[Section 2]{KPT2019_II}, and several other applications. A non-commutative version of the Schechtman-Schmuckenschl\"ager result for unit balls in classical random matrix ensembles was recently proved by Kabluchko, Prochno, and Th\"ale in \cite{KPT2020_intersections}. We also refer the reader to the recent survey \cite{PTT2020}.

In this paper, we generalize the result of Schechtman and Schmuckenschl\"ager in a different direction. Most of the previously mentioned results are obtained in the setting of $\ell_p^d$-balls, where it was crucial to have a probabilistic representation of the form presented in \eqref{eq:probabilistic representation} or a more general one from \cite{BartheGuedonEtAl} due to Barthe, Gu\'edon, Mendelson, and Naor, which is still restricted to $\ell_p^d$-balls. Here we study the more general setting of balls in classical, finite-dimensional Orlicz spaces, named after Polish mathematician W\l adys\l aw Orlicz.  Those spaces are natural generalizations of $\ell_p$-spaces and belong to the important class of symmetric Banach sequence spaces. Orlicz spaces are intensively studied in the functional analysis literature and we refer the reader to \cite{HKTJ2006, K1984, KMW2011, KS1985,PS2012,RS1988,S1995} and the references cited therein. Here we study the volumetric properties of balls in Orlicz spaces and obtain a Schechtman-Schmuckenschl\"ager result in this generalized framework. In fact, once we have determined the asymptotic volume of Orlicz balls, we can also compute the precise asymptotic volume ratio of ($2$-concave) Orlicz spaces. This quantity, which will be introduced when we present our main results, is deeply rooted in the geometry of Banach spaces and connected to several other quantities, such as the cotype-$2$ constant. In view of what we have explained before, at its core this generalized setting requires -- modulo other technicalities -- dealing with two problems. First, the absence of a Schechtman--Zinn-type probabilistic representation. Second, one needs to determine the precise asymptotic volume of unit balls in Orlicz spaces. Both problems can be overcome by taking the right perspective. In fact, it seems that a natural way to look at this problem is from the statistical mechanics and large deviations point of view, using the maximum entropy principle in the framework of non-interacting particles. This principle leads to a Gibbs distribution naturally associated with our problem. In particular, this explains how the $p$-generalized Gaussian distribution appears in this type of problems, providing a deeper structural insight and fresh perspective that we think will be useful in other geometric problems.

Before we present the main results of this paper, let us briefly explain the maximum entropy principle that leads to the distributions naturally associated with our problem.

\subsection{Main results}
We shall now present the main results of this paper, starting with the asymptotic (logarithmic) volume before we present the phase transition for the volume of intersections of Orlicz balls. In fact, we shall prove both the formula for the asymptotic logarithmic volume and for the precise asymptotic volume. The reason is that the former result follows from an exponential tilting technique coupled with the classical central limit theorem and gives some structural insight which is lost in the short proof of the precise asymptotic volume in which we use ideas and results from a paper on sharp Cram\'er large deviations by Petrov \cite{P1965}. In addition, one could prove both results in more general settings requiring in each case something weaker than $M$ be an Orlicz function (i.e., $M(0)=0$, $M(t)>0$ for $t\neq 0$, and $M$ is even and convex) and what is needed can be seen from the respective proofs. In what follows, for $R\in(0,\infty)$, let us denote by $B_M^d(dR)$ the Orlicz ball
\[
B_M^d(dR) = \Bigg\{x=(x_i)_{i=1}^d \in\R^d\,:\, \sum_{i=1}^d M(x_i) \leq dR \Bigg\}.
\]

\begin{thmalpha}\label{thm:log-volume orlicz}
Let $d\in\N$, $R\in(0,\infty)$, and $M$ be an Orlicz function. Then, as $d\to\infty$,
\[
\vol_d\big(B_M^d(dR)\big)^{1/d} \to e^{\varphi(\alpha_*)-\alpha_* R},
\]
i.e., on a logarithmic scale, we have
\[
\lim_{d\to\infty} \frac 1d \log \,\vol_d\big(B_M^d(dR)\big) = \varphi(\alpha_*)-\alpha_* R,
\]
and the  precise asymptotic volume is given by
\[
\vol_d\big(B_M^d(dR)\big) \sim \frac{1}{|\alpha_*| \sqrt{2\pi d\, \sigma_*^2}  }e^{d[\varphi(\alpha_*)-\alpha_* R]},
\]
where $\varphi:(-\infty,0)\to \R$ is given by $\varphi(\alpha) = \log \int_{\R}e^{\alpha M(x)}\,\dint x$ and $\alpha_*<0$ is chosen in such a way that $\varphi'(\alpha_*) = R$.
\end{thmalpha}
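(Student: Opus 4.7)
The plan is to reduce the volume integral to a probabilistic expectation via exponential tilting (a Gibbs change of measure), then apply the classical CLT for the logarithmic asymptotic and a local CLT / Petrov-type sharp Cram\'er expansion for the precise asymptotic. Since $M$ is even, convex, and positive off $0$, the Laplace integral $\int_\R e^{\alpha M(x)}\,\dint x$ converges for every $\alpha<0$; differentiating under the integral shows that $\varphi$ is smooth and strictly convex on $(-\infty,0)$ with $\varphi'(\alpha) = \mathbb{E}_\alpha[M(Y)]$, where $Y$ has the tilted density $p_\alpha(x) = e^{\alpha M(x) - \varphi(\alpha)}$. Take $\alpha_*$ as in the statement, let $Y_1,\dots,Y_d$ be i.i.d.\ with density $p_{\alpha_*}$, and set $S_d := \sum_{i=1}^d M(Y_i)$. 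Multiplying and dividing the indicator integrand by $\prod_i p_{\alpha_*}(x_i)$ yields
\[
\vol_d\bigl(B_M^d(dR)\bigr) \;=\; e^{d\varphi(\alpha_*)}\,\mathbb{E}\!\left[\mathbf{1}\{S_d \leq dR\}\, e^{-\alpha_* S_d}\right].
\]
Crucially, under the tilted measure $\mathbb{E}[S_d] = dR$ by the choice of $\alpha_*$, so $\{S_d \leq dR\}$ is a typical CLT-scale event rather than a large-deviation event.

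For the logarithmic asymptotic, the bound $e^{-\alpha_* S_d} \leq e^{-\alpha_* dR}$ on $\{S_d \leq dR\}$ immediately gives the upper bound $\vol_d(B_M^d(dR)) \leq e^{d(\varphi(\alpha_*)-\alpha_* R)}$. For the matching lower bound, restrict the expectation to $\{dR - \sqrt{d} \leq S_d \leq dR\}$; on this set $e^{-\alpha_* S_d} \geq e^{-\alpha_* dR - |\alpha_*|\sqrt{d}}$, while the classical CLT bounds the probability of this set below by a positive constant. Dividing by $d$ and letting $d\to\infty$ kills the $\sqrt{d}$ correction and delivers the logarithmic asymptotic.

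For the sharp asymptotic, set $T_d := S_d - dR$ (centered, variance $d\sigma_*^2$ with $\sigma_*^2 = \varphi''(\alpha_*)$), and rewrite
\[
\mathbb{E}[\mathbf{1}\{S_d \leq dR\}\, e^{-\alpha_* S_d}] \;=\; e^{-\alpha_* dR}\!\int_{-\infty}^0 e^{-\alpha_* t}\, f_{T_d}(t)\, \dint t,
\]
where $f_{T_d}$ denotes the density of $T_d$. Since $-\alpha_*>0$, the weight $e^{-\alpha_* t}$ decays exponentially as $t\to-\infty$, so the integral concentrates near $t=0$. The local central limit theorem in Petrov's sharp form gives $f_{T_d}(t) = (1+o(1))/\sqrt{2\pi d\sigma_*^2}$ uniformly on compacta containing $0$, while a routine tail estimate (using the universal density bound $\sup_t f_{T_d}(t) = O(1/\sqrt{d})$ together with the integrability of $e^{-\alpha_* t}$ on $(-\infty,0)$) controls the contribution outside a compact window. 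The substitution $u = -\alpha_* t$ then yields
\[
\int_{-\infty}^0 e^{-\alpha_* t}\, f_{T_d}(t)\,\dint t \;\sim\; \frac{1}{|\alpha_*|\sqrt{2\pi d\sigma_*^2}},
\]
and multiplying by $e^{d\varphi(\alpha_*)-\alpha_*dR}$ delivers the stated sharp asymptotic.

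The main obstacle is the local-CLT input. One must verify that the tilted distribution of $M(Y_1)$ satisfies Cram\'er's smoothness hypothesis (non-lattice with suitable integrability of the characteristic function, or a bounded density), so that Petrov's theorem applies and the Gaussian density approximation for $f_{T_d}$ holds uniformly on compacta. This is precisely where regularity of the Orlicz function $M$ and absolute continuity of $Y_1$ do the work; once the local-CLT estimate is in hand, the remainder is bookkeeping.
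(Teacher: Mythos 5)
Your tilting identity, the upper bound $e^{-\alpha_* S_d}\le e^{-\alpha_* dR}$ on $\{S_d\le dR\}$, and the CLT-window lower bound reproduce the paper's proof of the logarithmic asymptotics essentially verbatim, and that part is correct (the paper additionally records why $\alpha_*$ exists and is unique: $\varphi'(\alpha)\to 0$ as $\alpha\to-\infty$, $\varphi'(\alpha)\to+\infty$ as $\alpha\uparrow 0$, and $\varphi'$ is strictly increasing). The reduction of the sharp constant to the integral $\int_{-\infty}^{0}e^{-\alpha_* t}\,\dint F_{T_d}(t)$ is also exactly the paper's reduction to Petrov's integral.

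The gap is in how you evaluate that integral. You pass to a density $f_{T_d}$, invoke a local CLT uniformly on compacta, and control the tail with ``the universal density bound $\sup_t f_{T_d}(t)=O(1/\sqrt{d})$''. Neither ingredient is available for a general Orlicz function. The density of $M(Z_1)$ under the tilted law is $g(u)=2p_{\alpha_*}(M^{-1}(u))\,(M^{-1})'(u)$, which blows up as $u\downarrow 0$ whenever $M'(0)=0$; and if $M$ is flat enough at the origin (e.g.\ $M(t)=e^{-1/t^2}$ near $0$, extended convexly, for which $g(u)\asymp u^{-1}(\log(1/u))^{-3/2}$), then \emph{every} finite convolution $g^{*d}$ remains unbounded near $0$. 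Hence $\sup_t f_{T_d}(t)=+\infty$ for all $d$, the claimed $O(1/\sqrt d)$ bound is false, and Gnedenko's local limit theorem for densities (which requires boundedness of some finite convolution) does not apply. The singularity sits at $T_d=-dR$, where the weight $e^{-\alpha_* t}=e^{\alpha_* dR}$ is exponentially small, so the conclusion survives, but your argument as written does not. The clean repair --- and the paper's actual route via Petrov --- is to avoid densities altogether: integrate by parts against the distribution function and apply the Berry--Esseen theorem, which needs only that the law of $M(Z_1)$ is non-lattice (automatic here, since $Z_1$ has a density and $M$ is continuous and strictly increasing on $[0,\infty)$, so $M(Z_1)$ is atomless). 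Alternatively, keep your decomposition but replace the sup-density bound in the tail by the Kolmogorov--Rogozin concentration-function estimate $\sup_x\Pro\big[T_d\in[x,x+1]\big]=O(1/\sqrt{d})$, valid for any non-degenerate i.i.d.\ summands, and restrict the Gaussian density approximation to a compact window where $f_{T_d}$ is genuinely regular.
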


The next result determines the asymptotic behavior of the volume of intersections of two Orlicz balls when the dimension tends to infinity and generalizes the work of Schechtman and Schmuckenschl\"ager \cite{SS1991}. To merely obtain the phase transition it is enough to know the asymptotic logarithmic volume of Orlicz balls. However, we believe that the precise asymptotics are needed to deal with the critical case at the threshold, a problem we are currently investigating. Before we state the result notice that, since $M$ is an Orlicz function, we have, for all $a>0$, that
\begin{align}\label{eq:integral finite}
\int_{\R} e^{-aM(x)}\,\dint x <+\infty.
\end{align}
This follows directly if we let $c:=M(1)>0$ and observe that $M(x) \geq cx$ for all $x\geq 1$ because of the convexity assumption. Moreover, by the Leibniz integral rule, the integral in \eqref{eq:integral finite} is infinitely differentiable on $(0,\infty)$ as function in the variable $a$.

\begin{thmalpha}\label{thm:dichotomy}
Let $M_1$ and $M_2$ be two Orlicz functions and $R_1,R_2\in(0,\infty)$. Consider
\[
\varphi_1:=\varphi_{M_1}:(-\infty,0)\to\R,\qquad \varphi_1(\alpha) = \log \int_{\R}e^{\alpha M_1(x)}\,\dint x
\]
and choose $\alpha_*<0$ such that $\varphi_1'(\alpha_*) = R_1$. Define the Gibbs density
\[
p_1(x) := e^{\alpha_*M_1(x) - \varphi_1(\alpha_*)}, \qquad x\in\R.
\]
Then, we have
\[
\frac{\vol_d\big(B_{M_{1}}^d(dR_1) \cap B_{M_2}^d(dR_2)\big)}{\vol_d\big(B_{M_1}^d(dR_1)\big)} \stackrel{d\to\infty}{\longrightarrow}
\begin{cases}
0 & :\, \int_{\R}M_2(x)p_1(x)\,\dint x>R_2 \\
1 & :\, \int_{\R}M_2(x)p_1(x)\,\dint x<R_2,
\end{cases}
\]
and the speed of convergence is exponential.
\end{thmalpha}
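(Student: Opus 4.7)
The proof will reduce everything to a sum of i.i.d.\ random variables via the same Gibbs change-of-variables that powers Theorem~A. Let $Y_1,\dots,Y_d$ be i.i.d.\ with density $p_1$, and set $S_d:=\sum_{i=1}^d M_1(Y_i)$ and $T_d:=\sum_{i=1}^d M_2(Y_i)$. Inserting $1=p_1(x_i)^{-1}p_1(x_i)$ inside the defining volume integrals yields
\[
\vol_d\big(B_{M_1}^d(dR_1)\cap B_{M_2}^d(dR_2)\big)=e^{d\varphi_1(\alpha_*)}\,\ee{\mathbf{1}_{\{S_d\le dR_1,\,T_d\le dR_2\}}\,e^{-\alpha_* S_d}},
\]
and the analogous identity with the constraint on $T_d$ removed expresses $\vol_d(B_{M_1}^d(dR_1))$. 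Hence the ratio under study is
\[
\rho_d=\frac{\ee{\mathbf{1}_{\{S_d\le dR_1,\,T_d\le dR_2\}}\,e^{-\alpha_* S_d}}}{\ee{\mathbf{1}_{\{S_d\le dR_1\}}\,e^{-\alpha_* S_d}}}.
\]

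\textbf{The dichotomy via the LLN.} By the choice of $\alpha_*$ one has $\ee{M_1(Y_1)}=\varphi_1\cprime(\alpha_*)=R_1$ and $\ee{M_2(Y_1)}=\int_\R M_2p_1\,\dif x$. The law of large numbers therefore gives $T_d/d\to\int M_2p_1\,\dif x$ in probability, so $\mathbf{1}_{\{T_d\le dR_2\}}$ converges to $1$ or to $0$ according to whether this mean is smaller or larger than $R_2$. Since the denominator of $\rho_d$ does not depend on $M_2$, this already produces the qualitative dichotomy $\rho_d\to1$ or $\rho_d\to0$.

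\textbf{Exponential rate.} To quantify, observe that Theorem~A applied to the denominator gives the sharp asymptotic $e^{-\alpha_* dR_1}/(|\alpha_*|\sqrt{2\pi d\sigma_*^2})$, and the elementary bound $e^{-\alpha_* S_d}\le e^{-\alpha_* dR_1}$ on $\{S_d\le dR_1\}$ (valid because $\alpha_*<0$) reduces matters in both regimes to a one-sided Cram\'er estimate for $T_d$. In the regime $\int M_2p_1\,\dif x>R_2$, the required bound $\pr{T_d\le dR_2}\le e^{-cd}$ is almost immediate: $M_2\ge0$ makes $\lambda\mapsto\log\ee{e^{\lambda M_2(Y_1)}}$ finite on $(-\infty,0]$, which is all that Cram\'er's lower-tail theorem needs for a strictly positive rate at $R_2<\ee{M_2(Y_1)}$. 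In the regime $\int M_2p_1\,\dif x<R_2$, the companion bound $\pr{T_d>dR_2}\le e^{-cd}$ is produced by an additional exponential tilt in the direction of $M_2$: for $\gamma>0$ with $\ee{e^{\gamma M_2(Y_1)}}<\infty$, Markov gives
\[
\pr{T_d>dR_2}\le \exp(-d[\gamma R_2-\log\ee{e^{\gamma M_2(Y_1)}}]),
\]
and since the right-hand derivative of that cumulant at $\gamma=0$ equals $\int M_2p_1\,\dif x<R_2$, the exponent is strictly positive for small $\gamma$. Combined with the denominator's sharp asymptotic, both estimates give $|\rho_d-\ell|\le C\sqrt d\,e^{-c'd}$, where $\ell\in\{0,1\}$ is the claimed limit.

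\textbf{Main obstacle.} The delicate step is securing $\ee{e^{\gamma M_2(Y_1)}}<\infty$ for some $\gamma>0$ in the second regime, equivalently the integrability $\int e^{\alpha_*M_1(x)+\gamma M_2(x)}\,\dif x<\infty$, which can fail if $M_2$ grows much faster than $M_1$. The cleanest way to organise this is to work with the joint cumulant $\psi(\alpha,\gamma):=\log\int_\R e^{\alpha M_1(x)+\gamma M_2(x)}\,\dif x$ and exploit its strict convexity near $(\alpha_*,0)$, combined with the strict inequality $\int M_2p_1\,\dif x<R_2$, to produce an exponential gap between the Legendre transform values controlling numerator and denominator. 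Matching that gap against the sharp prefactor coming from Theorem~A is the technical heart of the argument.
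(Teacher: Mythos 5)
Your proposal follows the paper's own route almost exactly: the same Gibbs change of measure with density $p_1$, the same reduction of the ratio to
\[
\rho_d=\frac{\E\big[ \mathbbm 1_{\{S_d\le dR_1,\,T_d\le dR_2\}}e^{-\alpha_* S_d}\big]}{\E\big[ \mathbbm 1_{\{S_d\le dR_1\}}e^{-\alpha_* S_d}\big]},
\]
the law of large numbers for the qualitative dichotomy, the lower bound on the denominator from Theorem~\ref{thm:log-volume orlicz} (or its sharp form from Proposition~\ref{prop:asymptotic_volume}), and a Cram\'er estimate for $T_d$. The regime $\int_\R M_2p_1\,\dint x>R_2$ is indeed unproblematic: since $M_2\ge 0$, the cumulant $\lambda\mapsto\log\E[e^{\lambda M_2(Z_1)}]$ is finite for $\lambda\le 0$, and the lower-tail rate at $R_2<\E[M_2(Z_1)]$ is strictly positive (this works even when $\E[M_2(Z_1)]=+\infty$). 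Your bound $e^{-\alpha_*S_d}\le e^{-\alpha_* dR_1}$ on $\{S_d\le dR_1\}$ matched against the sharp denominator asymptotics is a clean way to finish that case.

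The obstacle you flag in the other regime is a genuine gap, and your proposed repair does not close it. If $\int_\R e^{\alpha_* M_1(x)+\gamma M_2(x)}\,\dint x=+\infty$ for every $\gamma>0$ (take, e.g., $M_1(x)=|x|$ and $M_2(x)=c(e^{|x|}-1)$ with $\alpha_*<-1$ and $c$ small, so that $\int M_2p_1\,\dint x<R_2$ is still satisfiable), then $M_2(Z_1)$ has a regularly varying upper tail under $p_1$, the joint cumulant $\psi(\alpha,\gamma)$ is infinite for all $\gamma>0$ no matter how $\alpha<0$ is moved, the Legendre transform degenerates ($\Lambda^*(x)=0$ for $x>\E[M_2(Z_1)]$), and $\Pro[T_d>dR_2]$ decays only polynomially by the one-big-jump principle. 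Since the single large coordinate that drives $T_d$ above $dR_2$ contributes only $O(\log d)$ to $S_d$, the joint constraint $\{S_d\le dR_1\}$ does not rescue the bound, so $1-\rho_d$ can decay polynomially rather than exponentially; for $\alpha_*$ close to $-1$ even the bound $1-\rho_d\le C\sqrt d\,\Pro[T_d>dR_2]$ fails to tend to zero and a finer argument (truncation plus a second-moment or local estimate) is needed just for the dichotomy. To get the stated exponential speed you must either impose a growth condition ensuring $\int_\R e^{\alpha_* M_1(x)+\gamma M_2(x)}\,\dint x<\infty$ for some $\gamma>0$, or argue differently. It is worth noting that the paper's own proof has exactly the same unaddressed point: it invokes Cram\'er's theorem for $\Pro[\sum_{i=1}^d Y_i^{(2)}>d(R_2-\int M_2p_1\,\dint x)]$ without verifying the requisite exponential moment of $M_2(Z_1)$ under the tilted measure. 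So your ``main obstacle'' paragraph correctly identifies the technical heart; as written, neither your sketch nor the published argument resolves it.
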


The last result of this manuscript concerns the precise asymptotic volume ratio of $2$-concave Orlicz spaces, i.e., those Orlicz spaces defined by an Orlicz function $M$ for which $M\circ\sqrt{|\cdot|}$ is concave. The volume ratio is an important quantity related to the geometry of finite-dimensional Banach spaces and defined as follows. Let $d\in\N$ and $K$ be a $d$-dimensional convex body. The volume ratio ${\rm vr}(K)$ of $K$ is defined as
$$
{\rm vr}(K) := \inf\left({\vol_d(K)\over\vol_d(\mathscr{E})}\right)^{1/d},
$$
where the infimum is taken over all ellipsoids $\mathscr{E}$ which are contained in $K$. If $K$ is the unit ball of a $d$-dimensional normed space $X$, then one also speaks of the volume ratio of $X$. The concept of volume ratio is a powerful one, having its origin in the works of Szarek \cite{S1978}, and Szarek and Tomczak-Jaegermann \cite{ST1980}. It lies at the very heart of a famous result of Ka\v{s}in on nearly Euclidean decompositions of $\ell_1^n$ and is also connected to the so-called Rademacher cotype-$2$ constant as is known from a deep result of Bourgain and Milman \cite{BM1987}. The volume ratio has been determined for various Banach spaces and we refer the reader to, e.g., \cite{DM2006, DP2009, GPSTJW2017, Sch1982}. In particular, we refer to \cite{KPT2020_volume_ratio} where the precise asymptotic volume ratio of Schatten $p$-classes, the non-commutative versions of $\ell_p$-spaces, has been computed quite recently based on logarithmic potential theory, which can be viewed as a subfield of statistical mechanics. This time the route to the precise asymptotics is based again on an idea from statistical mechanics, the principle of maximum entropy.

\begin{thmalpha}\label{thm:asymptotic volume ratio}
Let $M$ be a $2$-concave Orlicz function. Then, as $d\to\infty$, we have
\[
\lim_{d\to\infty} {\rm vr}\big(B_M^d(d)\big) = \frac{1}{\sqrt{2\pi e}M^{-1}(1)}e^{\varphi(\alpha_*) - \alpha_*},
\]
where $\varphi:(-\infty,0)\to \R$ is given by $\varphi(\alpha) = \log \int_{\R}e^{\alpha M(x)}\,\dint x$ and $\alpha_*<0$ is chosen in such a way that $\varphi'(\alpha_*) = 1$.
\end{thmalpha}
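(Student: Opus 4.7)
The plan is to identify the largest Euclidean ball inscribed in $B_M^d(d)$, show it coincides with the John ellipsoid (so it witnesses the infimum in the definition of ${\rm vr}$), compute its radius by a one-line Jensen argument exploiting $2$-concavity, and combine the result with Theorem~A and the standard asymptotics for $\vol_d(\B_2^d)$.

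\textbf{Step 1 (The John ellipsoid is a Euclidean ball).} The body $B_M^d(d)$ is invariant under the hyperoctahedral group $W_d$ generated by coordinate permutations and sign changes. By uniqueness of the maximum-volume inscribed (John) ellipsoid, this ellipsoid is also $W_d$-invariant; the only positive definite quadratic forms fixed by $W_d$ are scalar multiples of the identity, so the John ellipsoid is a Euclidean ball $r_d\B_2^d$. Being the volume maximizer among inscribed ellipsoids, it realizes the infimum in the definition of ${\rm vr}$.

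\textbf{Step 2 (Computing $r_d$ via Jensen).} Since $M$ is even, for every $x\in\R^d$ we have
\[
\frac1d\sum_{i=1}^d M(x_i)=\frac1d\sum_{i=1}^d M(|x_i|)\le M\!\left(\sqrt{\frac1d\sum_{i=1}^d x_i^2}\right)=M\!\left(\frac{\|x\|_2}{\sqrt d}\right),
\]
the inequality being Jensen applied to the concave function $s\mapsto M(\sqrt s)$ (concavity is exactly the $2$-concavity hypothesis) at the points $y_i=x_i^2$. Consequently $\|x\|_2\le r_d:=\sqrt d\,M^{-1}(1)$ forces $\sum_{i=1}^d M(x_i)\le d$, so $r_d\B_2^d\subseteq B_M^d(d)$. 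Sharpness is witnessed by the diagonal point $x^*:=(M^{-1}(1),\dots,M^{-1}(1))$, which lies on $\partial B_M^d(d)$ and has $\|x^*\|_2=r_d$.

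\textbf{Step 3 (Assembling the asymptotics).} Stirling's formula gives $\vol_d(\B_2^d)^{1/d}=\pi^{1/2}/\Gamma(1+d/2)^{1/d}\sim\sqrt{2\pi e/d}$, whence $\vol_d(r_d\B_2^d)^{1/d}\to M^{-1}(1)\sqrt{2\pi e}$. Combining this with Theorem~A (applied with $R=1$, which yields $\vol_d(B_M^d(d))^{1/d}\to e^{\varphi(\alpha_*)-\alpha_*}$) gives
\[
{\rm vr}\big(B_M^d(d)\big)=\left(\frac{\vol_d(B_M^d(d))}{\vol_d(r_d\B_2^d)}\right)^{1/d}\longrightarrow\frac{e^{\varphi(\alpha_*)-\alpha_*}}{M^{-1}(1)\sqrt{2\pi e}},
\]
as claimed. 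The main obstacle is Step~2: $2$-concavity is precisely the hypothesis that pins the closest boundary point to the diagonal. Without it, the minimum of $\|x\|_2$ on $\partial B_M^d(d)$ can shift to a coordinate axis point, yielding a different $r_d$ and breaking the asymptotic computation.
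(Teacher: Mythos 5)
Your proposal is correct and follows essentially the same route as the paper: both arguments reduce the problem to showing that the John ellipsoid of $B_M^d(d)$ is the Euclidean ball of radius $\sqrt{d}\,M^{-1}(1)$ via the identical Jensen inequality for the concave function $s\mapsto M(\sqrt{s})$ (with sharpness at the diagonal point), and then combine this with Theorem~A and Stirling's asymptotics for $\vol_d(\B_2^d)$. The only cosmetic difference is that you justify the ball shape of the John ellipsoid by uniqueness plus hyperoctahedral invariance, whereas the paper invokes the ``enough symmetries'' formalism and phrases the radius computation as the operator norm $\|\id:\ell_2^d\to\ell_{M/d}^d\|$.
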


\subsection{The maximum entropy principle \& Gibbs measures}
Let us explain here how the distribution $p_1(x)$ that plays a central role in Theorems \ref{thm:log-volume orlicz}, \ref{thm:dichotomy}, and \ref{thm:asymptotic volume ratio} naturally appears through what is known as the maximum entropy principle. Although the argumentation is not mathematically rigorous, it shows how distributions of Gibbs-type appear. We follow the exposition in \cite{RAS2015} and also refer the reader to \cite[Section 7.3]{DZ2010} and \cite[Section III]{E2006} for detailed expositions regarding micro-canonical and canonical ensembles.

Consider a sequence of independent and identically distributed random variables $Y_1,Y_2,\dots$ taking values in some Polish space $E$ and having distribution $\lambda\in\mathscr M_1(E)$, where $\mathscr M_1(E)$ is the space of probability measures on $E$ which we equip with the weak topology. With this topology, $\mathscr M_1(E)$ becomes a Polish space itself. For $d\in\N$, we denote by $L_d:=L_d^Y\in\mathscr M_1(E)$ the empirical measure associated with the $Y_i$'s, i.e.,
\[
L_d := \frac{1}{d}\sum_{i=1}^d \delta_{Y_i}.
\]
This measure is obviously a random probability measure. In the setting of Sanov's theorem (see, e.g., \cite[Section 5.2]{RAS2015}) we know that, as $d$ tends to infinity, $L_d\to \lambda$ almost surely at an exponential rate. If we consider a set $C$ of probability measures whose closure does not contain the measure $\lambda$, then by the law of large numbers, $\Pro[L_d \in C]\to 0$ as $d\to\infty$. The maximum entropy principle helps us to understand the case, where we condition on the rare event that $L_d$ remains in $C$. Roughly speaking and under certain assumptions, $L_d$ converges to the element in the set $C$ that minimizes the relative entropy (or Kullback-Leibler divergence) $H(\cdot|\lambda)$, and so \emph{maximizes} thermodynamic entropy. Recall that for probability measures $\nu,\mu\in\mathscr M_1(E)$,
\[
H(\nu|\mu) :=
\begin{cases}
\int_E p \log p \,\dint \mu & :\, p=\frac{\dint \nu}{\dint \mu}\,\,\text{ exists}\\
+\infty &\, \text{otherwise}.
\end{cases}
\]
Being a bit more formal, the maximum entropy principle states that if $C\subset \mathscr M_1(E)$ is closed, convex and satisfies
\[
\inf_{\nu\in C} H(\nu|\lambda) = \inf_{\nu\in C^{\circ}} H(\nu|\lambda) <+\infty,
\]
where $C^{\circ}$ denotes the interior of $C$, then there is a unique measure $\nu_*\in C$ minimizing $H(\cdot|\lambda)$ over the set $C$. Moreover, the conditional distributions of $L_d$ converge weakly, as $d\to\infty$, to $\delta_{\nu_*}$, i.e.,
\[
\lim_{d\to\infty} \Pro[L_d\in \cdot \,|\, L_d\in C] = \delta_{\nu_*}(\cdot)
\]
in the weak topology on $\mathscr M_1(\mathscr M_1(E))$ generated by $\mathscr C_b(\mathscr M_1(E))$, which is the space of bounded continuous and real-valued functions on $\mathscr M_1(E)$. Furthermore, one can show that for any $k\in\N$ the conditional distribution of $Y_k$ (conditioned on $L_d$ being in $C$) converges weakly to the relative entropy minimizing measure $\nu_*$. An application of the maximum entropy principle now shows how a Gibbs measure arises as limiting distribution, which is exactly what happens in the case of Orlicz balls.

So let $\mathscr H:E\to \R$  be a function (often referred to as Hamiltonian or energy) and consider $\overline{\mathscr H}_d := \frac{1}{d}\sum_{i=1}^d \mathscr H(Y_i)$, an average energy. Moreover, define for $R < \E_\lambda[\mathscr H]$ a set
\[
C := \Big\{\nu \in\mathscr M_1(E)\,:\, \E_\nu[\mathscr H]\leq R \Big\}.
\]
If the set $C$ satisfies the assumptions of the maximum entropy principle, then there exists a unique probability measure $\mu_{*}\in C$ minimizing the relative entropy $H(\cdot|\lambda)$ over $C$. Explicitly, it is given as the following Gibbs measure at inverse temperature $\alpha_*$:
\[
\mu_{*}(\dint x) = \frac{e^{-\alpha_*\mathscr H(x)}}{\int_E e^{-\alpha_*\mathscr H(x)}\,\lambda(\dint x)} \lambda(\dint x),
\]
where $\alpha_*>0$ is such that $\E_{\mu_{*}}[\mathscr H]=R$. So wrapping everything up, the maximum entropy principle says in this case that for each $k\in\N$ fixed,
\[
\lim_{d\to\infty} \Pro\Big[Y_k \in \cdot\,\Big|\Big.\, \overline{\mathscr H}_d\leq R\Big] = \mu_{*}.
\]

Now let us explain how this relates to our situation. Please note that this derivation is not mathematically rigorous, one the reasons being that in our setting  $\lambda$ is the Lebesgue measure, which is infinite.   Let $M$ be an Orlicz function and consider, for large $d$, random ``variables'' $Y_1,Y_2,\dots,Y_d$ uniformly ``distributed'' according to the infinite Lebesgue measure $\lambda$. We are interested in the volume of the Orlicz ball
\[
B_M^d(d) = \Bigg\{ (x_1,\dots,x_d) \in\R^d \,:\, \sum_{i=1}^d M(x_i) \leq d \Bigg\}.
\]
Conditioning on $Y=(Y_1,\dots,Y_d)$ being in $B_M^d(d)$ yields the uniform distribution on $B_M^d(d)$, because for any measurable subset $A$ of $\R^d$,
\[
\Pro\Big[Y\in A\,\Big|\Big.\, Y\in B_M^d(d)\Big] = \frac{\Pro[Y\in A\cap B_M^d(d)]}{\Pro[Y\in B_M^d(d)]} = \frac{\vol_d(A\cap B_M^d(d))}{\vol_d(B_M^d(d))}.
\]
Coming back to the maximum entropy principle, where $E=\R$, the Hamiltonian is given by the Orlicz function $M$, and $R=1$ (which is smaller than $\E_\lambda[M] = +\infty$), we have, roughly speaking, for any fixed $k\in\{1,\dots,d \}$  that
\[
\Pro\Big[Y_k\in \cdot\,\Big|\Big.\, \overline{\mathscr H}_d \leq 1 \Big]
=
\Pro\Big[Y_k \in \cdot \,\Big|\Big.\, \sum_{i=1}^d M(Y_i) \leq d \Big] \approx \mu_{*},
\]
where $\alpha_*>0$ is chosen such that $\E_{\mu_{*}}[M]=1$. So, under the ``energy constraint'' that $Y$ lies in an Orlicz ball, asymptotically the coordinates of $Y$ follows a Gibbs distribution $\mu_{*}$. So when studying random vectors in Orlicz balls or volumetric properties of Orlicz balls, then those Gibbs distributions provide the right probabilistic set-up for investigations.
Let us remark that a version of Sanov's theorem with infinite underlying measure $\lambda$ has been obtained in~\cite{BS2016}.

\vskip 5mm

The rest of the paper is organized as follows. In Section \ref{sec:prelim}, we present notions and notation as well as some background regarding Orlicz spaces and sharp Cram\'er large deviations. Then, in Section \ref{sec:asymptoticvolume}, we present the computation of the asymptotic ($\log$-)volume of Orlicz balls. After having computed asymptotic volumes, in Section \ref{sec:volumeintersection}, we can deal with the case of the asymptotic volume of the intersection of two Orlicz balls. Last but not least, in Section \ref{sec:asymptotic volume ratio}, we present the proof for the asymptotic formula of the volume ratio of $2$-concave Orlicz spaces.

\section{Preliminaries}\label{sec:prelim}

In this section, we present some notation and background material needed throughout the paper.

\subsection{Notation}

We shall denote by $\R^d$ the $d$-dimensional Euclidean space. The interior of a set $A\subset \R^d$ shall be denoted by $A^{\circ}$ and its closure by $\overline A$. When we speak of volume in $d$-dimensional space, denoted by $\vol_d$, then we simply mean the $d$-dimensional Lebesgue measure. For two sequences $(a_d)_{d\in\N}$ and $(b_d)_{d\in\N}$ of real numbers, we write $a_d \sim b_d$ if $\lim_{d\to\infty} \frac{a_d}{b_d}=1$.

\subsection{Orlicz spaces}

Let us recall that a convex function $M:\R\to \R$ is said to be an Orlicz function if $M(t) = M(-t)$, $M(0)=0$, and $M(t)>0$ for $t\neq 0$.
The functional
\[
\|(x_1,\dots,x_d)\|_{M} := \inf \left \{ \rho > 0 \,:\, \sum_{i=1}^dM\Big(\frac{|x_i|}{\rho}\Big)  \le 1 \right\}
\]
is a norm on $\R^d$, known as Luxemburg norm, named after W.~A.~J.~Luxemburg~\cite{Lux1955}. We now define the Orlicz space $\ell_M^d$ to be $\R^d$ equipped with this norm and denote by
\[
\B_M^d := \Big\{x=(x_i)_{i=1}^d\in\R^d\,:\, \|x\|_M \leq 1 \Big\}
\]
the unit ball in this space. Those spaces naturally generalize the classical $\ell_p^d$-spaces and belong to the class of $1$-symmetric Banach spaces. One commonly just speaks of Orlicz functions, Orlicz norms, and Orlicz spaces. An introduction to the theory of Orlicz spaces can be found in \cite{KR1961}.

In the next lemma, we collect the simple observation that $\B_M^d$ coincides with the set
\[
B_M^d := \Bigg\{x=(x_i)_{i=1}^d \in\R^d\,:\, \sum_{i=1}^d M(x_i) \leq 1 \Bigg\},
\]
which simplifies some computations because we do not need to work with the infimum. For the sake of completeness, we provide a proof of this fact.

\begin{lemma}\label{lem:equality of unit balls}
Let $d\in\N$ and $M$ be an Orlicz function. Then $\B_M^d = B_M^d$.
\end{lemma}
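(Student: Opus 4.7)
The statement is a standard fact about Luxemburg-type norms, and the plan is to prove the two inclusions separately after recording a few analytic properties of $M$. First I would note that $M$ is even, convex, satisfies $M(0)=0$ and $M(t)>0$ for $t\neq 0$, hence $M$ is continuous on $\R$ and nondecreasing on $[0,\infty)$. Consequently, for any fixed $x=(x_1,\dots,x_d)\in\R^d$, the function
\[
f_x(\rho) := \sum_{i=1}^d M\!\left(\tfrac{x_i}{\rho}\right), \qquad \rho>0,
\]
is continuous and nonincreasing in $\rho$, and (for $x\neq 0$) strictly decreases from $+\infty$ at $0^+$ to $0$ at $+\infty$.

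The inclusion $B_M^d \subseteq \B_M^d$ is immediate: if $x\in B_M^d$ then $f_x(1) = \sum_{i=1}^d M(x_i)\le 1$, so $\rho=1$ is an admissible value in the infimum defining $\|x\|_M$, giving $\|x\|_M\le 1$, i.e.\ $x\in\B_M^d$.

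For the reverse inclusion $\B_M^d \subseteq B_M^d$, let $x\in\B_M^d$, so $\|x\|_M\le 1$. The case $x=0$ is trivial since $f_x\equiv 0$. Otherwise set $S:=\{\rho>0 : f_x(\rho)\le 1\}$ and $c:=\inf S=\|x\|_M\in(0,1]$; by monotonicity of $f_x$, the set $S$ is an interval containing $(c,\infty)$. If $c<1$, then $1\in(c,\infty)\subseteq S$, so $\sum_{i=1}^d M(x_i) = f_x(1)\le 1$. If $c=1$, pick any sequence $\rho_n\downarrow 1$ with $\rho_n>1$; then $\rho_n\in S$, so $f_x(\rho_n)\le 1$ for every $n$, and continuity of $M$ together with $x_i/\rho_n\to x_i$ yields $f_x(1)=\lim_n f_x(\rho_n)\le 1$. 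In both cases $x\in B_M^d$, which completes the proof.

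The only subtlety is whether the infimum in the Luxemburg norm is attained, and this is handled in the $c=1$ case by approximating $\rho=1$ from above and using continuity of $M$; there is no serious obstacle.
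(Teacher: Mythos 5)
Your proof is correct. The ingredients are the same as in the paper's proof (continuity of the convex function $M$ and its monotonicity on $[0,\infty)$), but your organization is genuinely cleaner: you package everything into the single nonincreasing, continuous function $f_x(\rho)=\sum_{i=1}^d M(x_i/\rho)$ and argue directly, whereas the paper runs four separate arguments by contradiction (perturbing $\rho$ by $\pm\varepsilon$ in each case) and, in the direction $B_M^d\subseteq\B_M^d$, even establishes the stronger fact that $\|x\|_M=1$ whenever $\sum_i M(x_i)=1$, which is not needed for the set equality. You also correctly observe that the inclusion $B_M^d\subseteq\B_M^d$ is immediate because $\rho=1$ is admissible in the infimum, a shortcut the paper does not take. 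The only step worth making explicit is the justification that $M$ is continuous (finite convex functions on $\R$ are continuous) and strictly increasing where positive, and that $f_x(\rho)\to 0$ as $\rho\to\infty$ so that the set $S$ is nonempty; you state these and they follow from $M(0)=0$, $M(t)>0$ for $t\neq 0$, evenness, and convexity, so there is no gap.
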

\begin{proof}
Obviously the $0$-vector is contained in both sets and we may assume from now on that $x\neq 0$.

First assume that $x=(x_i)_{i=1}^d\in \B_M^d$. Then
\[
1\geq \|x\|_M = \inf\Bigg\{\rho>0\,:\,\sum_{i=1}^d M(x_i/\rho) \leq 1 \Bigg\}.
\]
Assume that $\sum_{i=1}^d M(x_i) > 1$. Then, because of the continuity of $M$, there exists $\varepsilon=\varepsilon(x)\in(0,\infty)$ such that $\sum_{i=1}^dM(x_i/(1+\varepsilon))>1$. On the other hand, for this $\varepsilon\in(0,\infty)$ there exists $\rho_0\in(0,\infty)$ such that $\rho_0< \|x\|_M+\varepsilon \leq 1+\varepsilon$ and $\sum_{i=1}^d M(x_i/\rho_0) \leq 1$. But then, since $M$ is increasing, we obtain the contradiction
\[
1< \sum_{i=1}^dM(x_i/(1+\varepsilon)) \leq \sum_{i=1}^d M(x_i/\rho_0) \leq 1.
\]
So $\sum_{i=1}^dM(x_i) \leq 1$, which means that $x\in B_M^d$.

Now let $x=(x_i)_{i=1}^d\in B_M^d$, i.e., $\sum_{i=1}^d M(x_i) \leq 1$. We consider two cases. First assume that $\sum_{i=1}^d M(x_i) < 1$. Then, because of the continuity of $M$, there exists $\varepsilon=\varepsilon(x)\in(0,\infty)$ such that
\[
\sum_{i=1}^d M(x_i/(1-\varepsilon)) \leq 1.
\]
This means $\|x\|_M \leq 1-\varepsilon \leq 1$ and so $x\in \B_M^d$. Now let $\sum_{i=1}^d M(x_i) = 1$ and assume that $\|x\|_M<1$. Then there exists some $\varepsilon=\varepsilon(x)\in(0,\infty)$ such that $\|x\|_M+\varepsilon<1$. For this $\varepsilon\in(0,\infty)$ there exists $\rho_0\in(0,\infty)$ such that $\rho_0 < \|x\|_M +\varepsilon<1$ and $\sum_{i=1}^dM(x_i/\rho_0) \leq 1$. But then, because $\rho_0<1$ and since $M$ in increasing, we obtain the contradiction
\[
1 = \sum_{i=1}^dM(x_i) < \sum_{i=1}^d M(x_i/\rho_0) \leq 1.
\]
Hence, we must have $\|x_M\|\geq 1$. Last but not least, we want to exclude that $\|x\|_M>1$, thereby establishing $\|x\|_M=1$. So assume that $\|x\|_M>1$. Then
\[
1 = \sum_{i=1}^d M(x_i) > \sum_{i=1}^d M(x_i/\|x\|_M).
\]
But then, because of continuity of $M$, there exists $\varepsilon=\varepsilon(x)\in(0,\infty)$ such that $\|x\|_M-\varepsilon>1$ and
\[
1 > \sum_{i=1}^d M(x_i/(\|x\|_M-\varepsilon)).
\]
This contradicts that $\|x\|_M$ is the infimum and so we must have $\|x\|_M=1$, which means $x\in\B_M^d$.
\end{proof}

As explained in the introduction, for our purposes it is crucial to understand the asymptotic volume of balls in Orlicz spaces. From a non-asymptotic point of view and for a fixed radius independent of the dimension, those volumes are known and follow from a more general result of Sch\"utt \cite{Sch1982} who obtained, among other things, simple formulas (up to absolute constants) for the volume of unit balls in finite-dimensional Banach spaces with a $1$-symmetric basis. More precisely, he proved that if $X$ is a finite-dimensional Banach space with a $1$-symmetric basis $e_1,\dots,e_n$ and norm $\|\cdot\|_X$, then
\[
\vol_d\Bigg(\Bigg\{ a\in\R^d \,:\, \Big\|\sum_{i=1}^da_ie_i\Big\|_X \leq 1\Bigg\}\Bigg) \approx 2^d \Big\|\sum_{i=1}^d e_i\Big\|_X^{-d}\,.
\]
The standard unit vectors in $\R^d$ form such a basis for the Orlicz spaces $\ell_M^d$ and therefore,
\[
\vol_d\big(\B_M^d\big) \approx 2^d \|(1)_{i=1}^d\|_M^{-d} = 2^{d}M^{-1}(1/d)^d.
\]
However, first of all such a bound is not sufficient for our purposes as it only provides estimates up to absolute constants and second, the natural setting to study the intersection of Orlicz balls (cf. \cite{SS1991,Schmu2001}) is to look at the volume of balls of dimension-dependent radius $dR$, i.e.,
\[
B_M^d(dR) := \Bigg\{x=(x_i)_{i=1}^d \in\R^d\,:\, \sum_{i=1}^d M(x_i) \leq dR \Bigg\},
\]
where $R\in(0,\infty)$. Note that for $R=1$ this is essentially the dimensional normalization considered in \cite{SS1991} as we may swallow the constant terms in the Orlicz function $M$. In the next section we present our result on the asymptotic (logarithmic) volume of Orlicz balls.

\subsection{Sharp Cram\'er large deviations \& Integral asymptotics}\label{sec:petrov}

In this subsection we shall briefly present the result of Petrov \cite{P1965} who proved a sharp version of Cram\'er's theorem (see, e.g., \cite{C1938, DZ2010}). We also wish to refer the reader to the recent work \cite{LR2020}, where sharp large deviations have been obtained in the geometric setting of $\ell_p^d$-spheres.

Consider a sequence $X_1,\dots,X_d$ of independent and identically distributed random variables with distribution $\Pro^X$ not concentrated on lattice. Define the set
\[
\mathscr B^+ := \Bigg\{ h \geq 0 \,:\, \int_{\R_{\geq 0}} e^{h x}\,\Pro^X(\dint x) < +\infty \Bigg\}.
\]
This set is non-empty, because we always have $0\in\mathscr B^+$. We let $B=\sup \mathscr B^+ \in[0,+\infty]$ and define for all $0<h<B$ the quantities
\[
R(h) := \int_\R e^{h x}\,\Pro^X(\dint x)
\]
and
\[
m(h) := \frac{1}{R(h)}\int_{\R} x e^{h x} \,\Pro^X(\dint x).
\]
Assume in the following that $B>0$.  Then one can show (see \cite[Lemma, pp. 287]{P1965}) the following limit exists,
\[
A_0:=\lim_{h\uparrow B}m(h).
\]
Moreover, if $\E[X_1]>-\infty$ and $A_0<+\infty$, then \cite[Theorem 1]{P1965} states that, as $d\to\infty$,
\[
\Pro\Bigg[\frac{1}{d}\sum_{i=1}^d X_i \geq x\Bigg] = \frac{1}{h_* \sqrt{2\pi d \sigma^2(h_*)}}e^{d\log R(h_*) - d h_* x}(1+o(1))
\]
where the convergence is uniform on
\[
\E[X_1]+\varepsilon \leq x \leq A_0-\varepsilon,\qquad \varepsilon\in(0,\infty)
\]
and $h_*$ is the unique real solution to $m(h_*) = x$ while $\sigma^2(h_*) = \frac{d}{dh} m(h)|_{h=h_*}$. In the proof, after a suitable measure tilting guaranteeing the existence of all exponential moments, Petrov uses the Berry-Esseen theorem to determine the asymptotic of the integral (see \cite[Equation (4.11)]{P1965})
\begin{align}\label{eq:Petrov integral}
I:= \int_0^\infty e^{-h_* t \sigma(h_*)\sqrt{d}}\,\dint \overline{F}_d(t),
\end{align}
where $\overline{F}_d$ is the distribution function of the random variables
\[
\frac{\sum_{i=1}^d \overline{X}_i-dm(h_*)}{\sigma(h_*)\sqrt{d}},
\]
with $\overline{X}_1$ having distribution function
\[
x\mapsto \frac{1}{R(h_*)}\int_{-\infty}^x e^{h_*y}\,\Pro^X(\dint y).
\]
What Petrov obtains (see \cite[Equation (4.19)]{P1965}) is that
\begin{align}\label{eq:asymptotic of Petroc integral}
I = \frac{1}{h_* \sigma(h_*) \sqrt{2\pi d}}(1+o(1)).
\end{align}
Exactly such an integral is what appears in the computation of the precise asymptotic volume of Orlicz balls and we can therefore use the asymptotic in \eqref{eq:asymptotic of Petroc integral}. 

\section{The asymptotic volume of Orlicz balls}\label{sec:asymptoticvolume}

We shall now present the computation of the volume of Orlicz balls $B_M^d(dR)$, $R\in(0,\infty)$. In fact, we start with the asymptotic \emph{logarithmic} volume. The proof of this result is based on an exponential tilting technique known, for instance, from large deviations theory, which is then coupled with the classical central limit theorem. Independently, this result was obtained in the updated version of \cite{KLR2019} in the context of the asymptotic thin-shell condition for Orlicz balls using a large deviations approach, while the authors finalized this manuscript.

\begin{proposition}\label{prop:logvolume}
Let $d\in\N$, $R\in(0,\infty)$, and $M$ be an Orlicz function. Then, as $d\to\infty$,
\[
\vol_d\big(B_M^d(dR)\big)^{1/d} \to e^{\varphi(\alpha_*)-\alpha_* R},
\]
that is, on a logarithmic scale, we have
\[
\lim_{d\to\infty}\frac 1d \log \,\vol_d\big(B_M^d(dR)\big) = \varphi(\alpha_*)-\alpha_* R,
\]
where $\varphi:(-\infty,0)\to \R$ is given by $\varphi(\alpha) = \log \int_{\R}e^{\alpha M(x)}\,\dint x$ and $\alpha_*<0$ is chosen in such a way that $\varphi'(\alpha_*) = R$.
\end{proposition}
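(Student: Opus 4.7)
The plan is a Cram\'er-style exponential tilt combined with the classical central limit theorem. For any $\alpha<0$, \eqref{eq:integral finite} ensures that $p_\alpha(x):=e^{\alpha M(x)-\varphi(\alpha)}$ is a \emph{bona fide} probability density on $\R$. Multiplying and dividing the integrand of $\vol_d(B_M^d(dR))$ by $e^{\alpha\sum_{i=1}^d M(x_i)}$ and recognising $\prod_i p_\alpha(x_i)$ as the joint density of i.i.d.\ variables $X_1,\dots,X_d\sim p_\alpha$ yields the key identity
\begin{equation*}
\vol_d\bigl(B_M^d(dR)\bigr) \;=\; e^{d\varphi(\alpha)}\,\E\bigl[e^{-\alpha S_d}\,\mathbf{1}\{S_d \le dR\}\bigr],\qquad S_d := \sum_{i=1}^d M(X_i).
\end{equation*}

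The upper bound is immediate: since $-\alpha>0$, on $\{S_d\le dR\}$ we have $e^{-\alpha S_d}\le e^{-\alpha dR}$, so $\vol_d(B_M^d(dR))\le e^{d[\varphi(\alpha)-\alpha R]}$ for every $\alpha<0$. Optimising the right-hand side leads to the natural choice $\alpha_*\in(-\infty,0)$ with $\varphi'(\alpha_*)=R$; existence and uniqueness of $\alpha_*$ follow from $\varphi$ being smooth and strictly convex on $(-\infty,0)$ with $\varphi'(\alpha)\to 0$ as $\alpha\to-\infty$ (the Gibbs density concentrates near the origin, where $M$ vanishes) and $\varphi'(\alpha)\to+\infty$ as $\alpha\to 0^-$ (otherwise convexity would keep $\varphi$ bounded, contradicting the divergence $\int e^{\alpha M(x)}\,\dint x\to\infty$ as $\alpha\uparrow 0$).

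For the matching lower bound, tilt at $\alpha_*$: then under $p_{\alpha_*}$ we have $\E[M(X_1)]=\varphi'(\alpha_*)=R$ and $\Var(M(X_1))=\varphi''(\alpha_*)=:\sigma_*^2\in(0,\infty)$, finite by the infinite differentiability of $\varphi$ pointed out after \eqref{eq:integral finite} and strictly positive because $M$ is non-constant. Restrict the expectation to the slab $\{dR-\sigma_*\sqrt d\le S_d\le dR\}$; on this set
\begin{equation*}
e^{-\alpha_* S_d} \;\ge\; e^{-\alpha_* dR}\,e^{\alpha_*\sigma_*\sqrt d},
\end{equation*}
while the classical CLT applied to the i.i.d.\ centred summands $M(X_i)-R$ gives $\prb{dR-\sigma_*\sqrt d\le S_d\le dR}\to \Phi(0)-\Phi(-1)>0$. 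Combining, $\vol_d(B_M^d(dR))\ge c\,e^{\alpha_*\sigma_*\sqrt d}\,e^{d[\varphi(\alpha_*)-\alpha_* R]}$ for some $c>0$ and all large $d$. Taking $d$-th roots and letting $d\to\infty$ kills both the constant and the subexponential factor $e^{\alpha_*\sigma_*/\sqrt d}$, so the lower bound matches the upper bound on the logarithmic scale.

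The only genuine obstacle is the analytic housekeeping for $\varphi$: proving smoothness and strict convexity on $(-\infty,0)$, identifying the endpoint limits of $\varphi'$, and verifying finiteness and positivity of $\sigma_*^2$. All of these reduce to the Leibniz rule applied to $\alpha\mapsto\int e^{\alpha M(x)}\,\dint x$, legitimate on every compact subinterval of $(-\infty,0)$ thanks to the domination $M(x)\ge M(1)|x|$ for $|x|\ge 1$ noted right after \eqref{eq:integral finite}, together with the fact that an Orlicz function cannot be constant on any non-degenerate interval. Everything else is a textbook tilt-plus-CLT computation.
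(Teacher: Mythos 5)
Your proposal is correct and follows essentially the same route as the paper: the exponential tilt to the Gibbs density $p_{\alpha_*}$, the Chernoff-type upper bound $e^{d[\varphi(\alpha)-\alpha R]}$ (which the paper states directly at $\alpha=\alpha_*$), and a lower bound obtained by restricting to a slab of width of order $\sqrt{d}$ below $dR$ and invoking the central limit theorem. The only cosmetic differences are that you optimise over $\alpha$ rather than tilting at $\alpha_*$ from the outset, and you spell out the endpoint limits of $\varphi'$ slightly more explicitly than the paper does.
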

\begin{proof}
Let us define for $\alpha<0$ the function
\[
\varphi(\alpha) := \log \int_{\R} e^{\alpha M(x)}\,\dint x,
\]
which is finite because $M$ is an Orlicz function (see \eqref{eq:integral finite}).
Moreover, $\varphi$ is twice continuously differentiable on $(-\infty,0)$ with
\begin{equation}\label{eq:derivative of phi}
\varphi'(\alpha) = \frac{\int_{\R}\partial_\alpha e^{\alpha M(x)}\,\dint x }{\int_{\R}e^{\alpha M(x)}\,\dint x} = \frac{\int_{\R} M(x) e^{\alpha M(x)}\,\dint x }{\int_{\R}e^{\alpha M(x)}\,\dint x}\,.
\end{equation}
Let $\alpha_*:=\alpha_*(R)<0$ be such that $\varphi'(\alpha_*) = R$.  To see that such $\alpha_*$ exists uniquely,  one can easily check that $\lim_{\alpha \uparrow 0} \varphi'(\alpha) =+\infty$ and $\lim_{\alpha \to -\infty} \varphi'(\alpha) = 0$. Since the function $\varphi'$ is strictly monotone increasing, $\alpha_*$ with the required property exists and is unique.    Now we consider independent and identically distributed random variables $Z_1,Z_2,\dots$ with Lebesgue-density given by
\[
p(x) = e^{\alpha_*M(x) - \varphi(\alpha_*)}, \qquad x\in\R.
\]
This is indeed a density because of the definition of $\varphi$, since
\[
\int_{\R}e^{\alpha_*M(x) - \varphi(\alpha_*)} \,\dint x = e^{- \varphi(\alpha_*)} \int_{\R }e^{\alpha_*M(x)}\,\dint x = e^{- \varphi(\alpha_*)} e^{ \varphi(\alpha_*)} = 1.
\]
We now show that $\E[M(Z_1)] = R$ and $\Var[M(Z_1)] = \varphi''(\alpha_*)>0$. For this consider the function
\[
(-\infty,0) \ni \alpha\mapsto e^{\varphi(\alpha)} = \int_{\R}e^{\alpha M(x)}\,\dint x.
\]
Then, using \eqref{eq:derivative of phi}, we find that
\begin{equation}\label{eq: first derivative of exp phi}
\frac{d}{d\alpha} e^{\varphi(\alpha)} = \varphi'(\alpha) e^{\varphi(\alpha)} = \int_{\R}M(x)e^{\alpha M(x)}\,\dint x.
\end{equation}
Similarly, we obtain
\begin{equation}\label{eq: second derivative of exp phi}
\frac{d^2}{d\alpha^2} e^{\varphi(\alpha)} = e^{\varphi(\alpha)}\varphi''(\alpha) + e^{\varphi(\alpha)}(\varphi'(\alpha))^2 = \int_{\R} M(x)^2e^{\alpha M(x)}\,\dint x.
\end{equation}
Therefore,
\[
\E[M(Z_1)] = \int_{\R}M(x)e^{\alpha_*M(x)-\varphi(\alpha_*)}\,\dint x = e^{-\varphi(\alpha_*)}\int_{\R} M(x)e^{\alpha_*M(x)}\,\dint x \stackrel{\eqref{eq: first derivative of exp phi}}{=} e^{-\varphi(\alpha_*)} \varphi'(\alpha_*)e^{\varphi(\alpha_*)} = \varphi'(\alpha_*) = R
\]
and
\[
\E[M(Z_1)^2] = \int_{\R}e^{\alpha_*M(x) - \varphi(\alpha_*)}M(x)^2\,\dint x = e^{-\varphi(\alpha_*)} \int_{\R}e^{\alpha_*M(x)}M(x)^2 \,\dint x \stackrel{\eqref{eq: second derivative of exp phi}}{=} \varphi''(\alpha_*) + \varphi'(\alpha_*)^2.
\]
Hence, we find
\[
\Var[M(Z_1)] = \varphi''(\alpha_*).
\]

Now we consider the independent and identically distributed random variables $Y_i := M(Z_i)-R$, $i\in\N$ with $\E[Y_1] = 0$ and $\Var[Y_1] =\Var[M(Z_1)] = \varphi''(\alpha_*) =: \sigma_*^2 $. Then
\begin{align*}
\vol_d\big(B_M^d(dR)\big) & = \int_{\R^d} \mathbbm 1_{B_M^d(dR)} (x_1,\dots,x_d)\, d\lambda^d(x_1\dots,x_d) \cr
& = \int_{\R^d}  \mathbbm 1_{B_M^d(dR)} (x_1,\dots,x_d) \underbrace{e^{-\alpha_*\sum_{i=1}^dM(x_i) + d\varphi(\alpha_*)} \prod_{i=1}^dp(x_i)}_{=1}\, d\lambda(x_1)\dots d\lambda(x_d) \cr
& = \E\Big[\mathbbm 1_{B_M^d(dR)}(Z_1,\dots,Z_d) e^{-\alpha_*\sum_{i=1}^dM(Z_i)+d\varphi(\alpha_*)}\Big] \cr
& = \E\Big[\mathbbm 1_{\{\sum_{i=1}^dY_i \leq 0\}} e^{-\alpha_*\sum_{i=1}^dY_i - d\alpha_*R+d\varphi(\alpha_*)}\Big] \cr
& = e^{d\big(\varphi(\alpha_*)-\alpha_* R\big)} \E\Big[ \mathbbm 1_{\{\sum_{i=1}^dY_i \leq 0\}} e^{-\alpha_*\sum_{i=1}^dY_i}\Big].
\end{align*}

Let us continue with a lower and an upper bound. We have, for every $c\in(0,\infty)$,
\[
\E\Big[ \mathbbm 1_{\big\{\sum_{i=1}^dY_i \leq 0\big\}} e^{-\alpha_*\sum_{i=1}^dY_i}\Big] \geq \E \Big[ \mathbbm 1_{\big\{-c\sqrt{d} \leq \sum_{i=1}^dY_i \leq 0\big\}} e^{-\alpha_*\sum_{i=1}^dY_i} \Big] \geq \E \Big[ \mathbbm 1_{\big\{-c\sqrt{d} \leq \sum_{i=1}^dY_i \leq 0\big\}} e^{c \alpha_*\sqrt{d}} \Big],
\]
where we used that $-\alpha_*>0$. For the last expression, we have
\[
\E \Big[ \mathbbm 1_{\big\{-c\sqrt{d} \leq \sum_{i=1}^dY_i \leq 0\big\}} e^{c \alpha_*\sqrt{d}} \Big] = e^{c \alpha_*\sqrt{d}} \Pro\Big[ \frac{1}{\sqrt{d}}\sum_{i=1}^dY_i\in[-c,0]\Big]
\]
and so
\[
\vol_d\big(B_M^d(dR)\big) \geq e^{d\big(\varphi(\alpha_*)-\alpha_* R\big)} e^{c \alpha_*\sqrt{d}} \Pro\Big[ \frac{1}{\sqrt{d}}\sum_{i=1}^dY_i\in[-c,0]\Big].
\]
Note that by the central limit theorem,
\[
\Pro\Big[ \frac{1}{\sqrt{d}}\sum_{i=1}^dY_i\in[-c,0]\Big] \stackrel{d\to\infty}{\longrightarrow} \mathscr N(0,\sigma_*^2)([0,c]),
\]
which is a strictly positive constant.
Similar to the lower bound, we can obtain an upper one. We have
\[
\E\Big[ \mathbbm 1_{\big\{\sum_{i=1}^dY_i \leq 0\big\}} e^{-\alpha_*\sum_{i=1}^dY_i}\Big] \leq \E\Big[ \mathbbm 1_{\big\{\frac{1}{\sqrt{d}}\sum_{i=1}^dY_i \leq 0\big\}} e^{-\alpha_*\sum_{i=1}^dY_i}\Big] \leq \Pro\Big[ \frac{1}{\sqrt{d}}\sum_{i=1}^dY_i\in (-\infty,0]\Big]
\]
and therefore,
\[
\vol_d\big(B_M^d(dR)\big) \leq e^{d\big(\varphi(\alpha_*)-\alpha_* R\big)} \Pro\Big[ \frac{1}{\sqrt{d}}\sum_{i=1}^dY_i\in (-\infty,0]\Big].
\]
Again, the central limit theorem implies that
\[
\Pro\Big[ \frac{1}{\sqrt{d}}\sum_{i=1}^dY_i\in (-\infty,0]\Big]\stackrel{d\to\infty}{\longrightarrow} \mathscr N(0,\sigma_*^2)((-\infty,0]) = \frac{1}{2}.
\]
Collecting what we obtained above, we see that, for any $c\in(0,\infty)$,
\[
e^{d\big(\varphi(\alpha_*)-\alpha_* R\big)} e^{c \alpha_*\sqrt{d}} \Pro\Big[ \frac{1}{\sqrt{d}}\sum_{i=1}^dY_i\in[-c,0]\Big] \leq \vol_d\big(B_M^d(dR)\big) \leq e^{d\big(\varphi(\alpha_*)-\alpha_* R\big)} \Pro\Big[ \frac{1}{\sqrt{d}}\sum_{i=1}^dY_i\in (-\infty,0]\Big].
\]
Now, taking the $d^{\text{th}}$ root and letting $d\to\infty$, the central limit theorem shows that
\[
\vol_d\big(B_M^d(dR)\big)^{1/d} \to e^{\varphi(\alpha_*)-\alpha_* R},
\]
which completes the proof.
\end{proof}

The next result provides the exact asymptotics for the volume. Its proof is more delicate and based on ideas that can be found in a paper on large deviations for sums of independent and identically distributed random variables by Petrov \cite{P1965}, more precisely, in the proof of Theorem 1 there. Theorem 1 is a version of Cram\'er's theorem \cite{C1938} (see also \cite{DZ2010}), but not just on a logarithmic scale, providing the precise asymptotics. We shall use parts of his result as outlined in Section \ref{sec:petrov}. Let us also remark that some of the results in \cite{P1965} can be found in the earlier work \cite{BRR1960} of Bahadur and Rao of which Petrov was unaware.

\begin{proposition}\label{prop:asymptotic_volume}
Let $d\in\N$, $R\in(0,\infty)$, and $M$ be an Orlicz function. Then, as $d\to\infty$,
\[
\vol_d\big(B_M^d(dR)\big) \sim \frac{1}{|\alpha_*| \sqrt{2\pi d\, \sigma_*^2}  }e^{d[\varphi(\alpha_*)-\alpha_* R]},
\]
where $\varphi:(-\infty,0)\to \R$ is given by $\varphi(\alpha) = \log \int_{\R}e^{\alpha M(x)}\,\dint x$ and $\alpha_*<0$ is chosen in such a way that $\varphi'(\alpha_*) = R$.
\end{proposition}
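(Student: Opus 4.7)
The plan is to reuse the integral representation derived in the proof of Proposition \ref{prop:logvolume} and replace the crude sandwich argument by the sharp integral asymptotic of Petrov recalled in \eqref{eq:asymptotic of Petroc integral}. Let $Z_1,\ldots,Z_d$ be i.i.d.\ with Gibbs density $p(x) = e^{\alpha_* M(x) - \varphi(\alpha_*)}$, let $Y_i := M(Z_i) - R$, and set $S_d := Y_1 + \cdots + Y_d$. As already shown in the proof of Proposition \ref{prop:logvolume}, the $Y_i$ are centred with common variance $\sigma_*^2 = \varphi''(\alpha_*) > 0$ and
\[
\vol_d\big(B_M^d(dR)\big) = e^{d[\varphi(\alpha_*) - \alpha_* R]} \cdot J_d, \qquad J_d := \E\Big[\mathbbm{1}_{\{S_d \leq 0\}}\, e^{-\alpha_* S_d}\Big],
\]
so the task reduces to proving $J_d \sim \big(|\alpha_*|\sqrt{2\pi d\, \sigma_*^2}\big)^{-1}$.

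To massage $J_d$ into Petrov's form \eqref{eq:Petrov integral}, I would standardize by introducing $U_d := -S_d / (\sigma_* \sqrt{d})$. Since $\alpha_* < 0$, the event $\{S_d \leq 0\}$ coincides with $\{U_d \geq 0\}$ and $-\alpha_* S_d = -|\alpha_*|\,\sigma_*\sqrt{d}\,U_d$, so a change of variables yields
\[
J_d = \int_0^{\infty} e^{-|\alpha_*|\, \sigma_* \sqrt{d}\, u}\, dF_{U_d}(u),
\]
where $F_{U_d}$ is the distribution function of $U_d$. By the classical central limit theorem, $U_d$ converges in distribution to a standard normal variable. This integral is of exactly the shape analysed in Section \ref{sec:petrov}, with $h_*$ replaced by $|\alpha_*|$ and $\sigma(h_*)$ replaced by $\sigma_*$, so the Berry--Esseen based estimate \eqref{eq:asymptotic of Petroc integral} immediately yields
\[
J_d = \frac{1}{|\alpha_*|\, \sigma_* \sqrt{2\pi d}}\,\bigl(1 + o(1)\bigr),
\]
from which the claim follows upon multiplying by the exponential prefactor above.

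The main technical point is verifying that Petrov's argument truly applies in our setting: the variables $Y_i$ must satisfy the non-lattice condition and possess a finite third (indeed, any) absolute moment. Non-latticeness holds because $M$ is strictly increasing on $[0,\infty)$ and $Z_1$ has a continuous (Gibbs) density, so $Y_1 = M(Z_1) - R$ has an absolutely continuous distribution. Finiteness of $\E|Y_1|^k$ for every $k \in \N$ is immediate from the finiteness of $\int_{\R} M(x)^k e^{\alpha_* M(x)}\, \dint x$ for each $k$, which follows exactly as in \eqref{eq:integral finite} since $\alpha_* < 0$ and $M$ grows at least linearly at infinity; this is also already encoded in the infinite differentiability of $\varphi$ on $(-\infty, 0)$ noted right after \eqref{eq:integral finite}. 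Once these routine moment and regularity checks are in place, no new ingredient is needed: the precise asymptotic is a direct transcription of \eqref{eq:asymptotic of Petroc integral} to the present lower-tail set-up.
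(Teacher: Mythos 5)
Your proposal is correct and follows essentially the same route as the paper: reuse the tilted-measure identity from Proposition \ref{prop:logvolume} and evaluate the remaining expectation by recognizing it as Petrov's integral \eqref{eq:Petrov integral}, whose asymptotic \eqref{eq:asymptotic of Petroc integral} gives the factor $\bigl(|\alpha_*|\sqrt{2\pi d\,\sigma_*^2}\bigr)^{-1}$. Your explicit standardization $U_d=-S_d/(\sigma_*\sqrt{d})$ and the verification of the non-lattice and moment hypotheses are slightly more detailed than the paper's presentation, but the argument is the same.
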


\begin{proof}
We use the notation as introduced in the proof of Proposition \ref{prop:logvolume}. As shown there, we have
\begin{align}\label{eq:volume of B_M^d(dR)}
\vol_d\big(B_M^d(dR)\big)
& = e^{d\big(\varphi(\alpha_*)-\alpha_* R\big)} \E\Big[ \mathbbm 1_{\{\sum_{i=1}^dY_i \leq 0\}} e^{-\alpha_*\sum_{i=1}^dY_i}\Big],
\end{align}
where $Y_i:= M(Z_i)-R$, $i\in\{1,\dots,d\}$ with $Z_1,\dots, Z_d$ independent and having Lebesgue density $p(x) = \exp(\alpha_*M(x)-\varphi(\alpha_*))$, $x\in\R$.  In particular, the distribution of $Y_1$ is not concentrated on a lattice.
Recall also that $\E[Y_1]=0$ and  $\Var[Y_1] = \varphi''(\alpha_*) =: \sigma_*^2$.
Let us denote by $\mu_d$ the distribution of $\sum_{i=1}^dY_i$. Then we have
\[
\E\Bigg[ \mathbbm 1_{\{\sum_{i=1}^dY_i \leq 0\}} e^{-\alpha_*\sum_{i=1}^dY_i}\Bigg] = \int_{-\infty}^0 e^{-\alpha_* y}\,\mu_d(\dint y).
\]
Now it is left to understand the integral on the right-hand side. Here we observe that it is exactly the integral $I$ that appears in \cite[Equation 4.11]{P1965} (see also \eqref{eq:Petrov integral}), just with a different sign. As is demonstrated in Petrov's proof via a Berry--Esseen argument, for $d\to\infty$ this integral can be evaluated (see \cite[Equation 4.19]{P1965} and \eqref{eq:asymptotic of Petroc integral}) as follows,
\[
\int_{-\infty}^0 e^{-\alpha_* y}\,\mu_d(\dint y) \sim \frac{1}{|\alpha_*|\sqrt{2\pi d \sigma_*^2}}.
\]
In combination with \eqref{eq:volume of B_M^d(dR)}, we obtain for $d\to\infty$ that
\[
\vol_d\big(B_M^d(dR)\big) \sim \frac{1}{|\alpha_*| \sqrt{2\pi d\, \sigma_*^2}  }e^{d[\varphi(\alpha_*)-\alpha_* R]},
\]
which completes the proof.
\end{proof}

\begin{rmk}
Let us note again that the result of Proposition \ref{prop:asymptotic_volume} holds in a more general setting than the one for Orlicz functions presented here. In fact, we only need that the random variables $Y_1,\dots,Y_d$ which are defined in terms of $M$ do not have a distribution concentrated on some lattice together with the assumption that for $M$ the integral in \eqref{eq:integral finite} is finite (for some $a>0$) and that we can find $\alpha_*<0$ such that $\varphi'(\alpha_*) = R$.
\end{rmk}

\section{The volume of intersections of Orlicz balls}\label{sec:volumeintersection}

Equipped with the asymptotics of the ($\log$-)volume of Orlicz balls, we can now study the volume of the intersection of two such balls as the dimension of the ambient space tends to infinity. In fact, we shall see what the precise threshold for the change in convergence behavior is and obtain the corresponding dichotomy previously known for $\ell_p^d$-balls through the work of Schechtman and Schmuckenschl\"ager \cite{SS1991}. The precise quantity of interest is
\[
\frac{\vol_d\big(B_{M_{1}}^d(dR_1) \cap B_{M_2}^d(dR_2)\big)}{\vol_d\big(B_{M_1}^d(dR_1)\big)}, \qquad d\in\N,
\]
where $M_1$ and $M_2$ are Orlicz functions and $R_1, R_2\in(0,\infty)$.
This is indeed a canonical way of generalizing the framework studied in \cite{SS1991} as it resembles the uniform distribution on $B_{M_1}^d(dR_1)$.


\subsection{Volume of intersections -- Non-critical case}

The general idea of proof is to proceed in a similar way to when we determined the $\log$-asymptotic volume of Orlicz balls $B_M^d(dR)$. However, one needs to consider two appropriate sets of random variables and complement the argument by Cram\'er's large deviation theorem for independent and identically distributed random variables.

\begin{proof}[Proof of Theorem \ref{thm:dichotomy}]
Let $Z_1,\dots,Z_d$ be independent random variables each with distribution given by the Lebesgue-density
\[
p_1(x) := e^{\alpha_*M_1(x) - \varphi_1(\alpha_*)}, \qquad x\in\R,
\]
where for $\alpha<0$, we have
\[
\varphi_1(\alpha) := \log \int_{\R}e^{\alpha M_1(x)}\,\dint x
\]
and where $\alpha_*<0$ is now chosen such that $\varphi_1'(\alpha_*) = R_1$.
Then, we find that
\begin{align*}
& \vol_d\big(B_{M_1}^d(dR_1)\cap B_{M_2}^d(dR_2)\big) \cr
& = \int_{\R^d} \mathbbm 1_{B_{M_1}^d(dR_1)} (x_1,\dots,x_d)\mathbbm 1_{B_{M_2}^d(dR_2)} (x_1,\dots,x_d)\, d\lambda^d(x_1\dots,x_d) \cr
& = \int_{\R^d}  \mathbbm 1_{B_{M_1}^d(dR_1)} (x_1,\dots,x_d) \mathbbm 1_{B_{M_2}^d(dR_2)}(x_1,\dots,x_d)  e^{-\alpha_*\sum_{i=1}^dM_1(x_i) + d\varphi_1(\alpha_*)} \prod_{i=1}^dp_1(x_i)\, d\lambda(x_1)\dots d\lambda(x_d) \cr
& = \E\Big[\mathbbm 1_{B_{M_1}^d(dR_1)}(Z_1,\dots,Z_d) \mathbbm 1_{B_{M_2}^d(dR_2)}(Z_1,\dots,Z_d) e^{-\alpha_*\sum_{i=1}^dM_1(Z_i)+d\varphi_1(\alpha_*)}\Big].
\end{align*}
Now we need to modify the probabilistic argument seen before. Let us define random variables $Y_1^{(1)},\dots,Y_d^{(1)}$ and $Y_1^{(2)},\dots,Y_d^{(2)}$ via
\[
Y_i^{(1)} := M_1(Z_i) - R_1 \qquad\text{and}\qquad Y_i^{(2)}:= M_2(Z_i) - \int_{\R}M_2(x)p_1(x)\,\dint x
\]
for $i\in\{1,\dots,d\}$. Then $Y_1^{(1)},\dots,Y_d^{(1)}$ are independent and also $Y_1^{(2)},\dots,Y_d^{(2)}$ are independent. Moreover, $\E[Y_1^{(1)}] = 0 = \E[Y_1^{(2)}]$ and $\Var[Y_1^{(1)}] = \varphi_1''(\alpha_*)$ while $\Var[Y_1^{(2)}] = \E[(Y_1^{(2)})^2] = \Var[M_2(Z_1)]$. Using those transformations of the original random variables, we may write
\begin{align*}
\vol_d\big(B_{M_1}^d(dR_1)\cap B_{M_2}^d(dR_2)\big) & = \E\Bigg[\mathbbm 1_{\big\{\sum_{i=1}^dY_i^{(1)}\leq 0 \big\}} \mathbbm 1_{\big\{\sum_{i=1}^dY_i^{(2)} \leq d[R_2 - \int_{\R}M_2(x)p_1(x)\,\dint x] \big\}} e^{-\alpha_*\sum_{i=1}^dY_i^{(1)} - d\alpha_*R_1+d\varphi_1(\alpha_*)}\Bigg] \cr
& =e^{d[\varphi_1(\alpha_*) - \alpha_*R_1]} \E\Bigg[\mathbbm 1_{\big\{\sum_{i=1}^dY_i^{(1)}\leq 0 \big\}} \mathbbm 1_{\big\{\sum_{i=1}^dY_i^{(2)} \leq d[R_2 - \int_{\R}M_2(x)p_1(x)\,\dint x] \big\}} e^{-\alpha_*\sum_{i=1}^dY_i^{(1)}}\Bigg].
\end{align*}
The idea is that  by the (strong) law of large numbers,
\[
\frac{1}{d}\sum_{i=1}^d Y_i^{(2)}\stackrel{a.s.}{\longrightarrow} 0\qquad\text{as }\,d\to\infty.
\]
Hence, we see that for $d\to\infty$ the event
\[
\Bigg\{\sum_{i=1}^dY_i^{(2)} \leq d\Big[R_2 - \int_{\R}M_2(x)p_1(x)\,\dint x\Big] \Bigg\}
\]
occurs with probability converging to $0$ -- even exponentially fast -- if $\int_{\R}M_2(x)p_1(x)\,\dint x>R_2$ and probability approaching $1$ if
$\int_{\R}M_2(x)p_1(x)\,\dint x<R_2$.

Consider the case $\int_{\R}M_2(x)p_1(x)\,\dint x>R_2$. To obtain an upper bound for the volume of the intersection, observe that
$$
\E\Bigg[\mathbbm 1_{\big\{\sum_{i=1}^dY_i^{(1)}\leq 0 \big\}} \mathbbm 1_{\big\{\sum_{i=1}^dY_i^{(2)} \leq d[R_2 - \int_{\R}M_2(x)p_1(x)\,\dint x] \big\}} e^{-\alpha_*\sum_{i=1}^dY_i^{(1)}}\Bigg]
\leq
\Pro\Bigg[\sum_{i=1}^dY_i^{(2)} \leq d[R_2 - \int_{\R}M_2(x)p_1(x)\,\dint x]\Bigg],
$$
which goes to $0$ exponentially fast. It follows from the lower bound for $\vol_d\big(B_{M_1}^d(dR_1)\big)$ in the proof of Theorem~\ref{thm:log-volume orlicz} that
$$
\frac{\vol_d\big(B_{M_1}^d(dR_1)\cap B_{M_2}^d(dR_2)\big)}  {\vol_d\big(B_{M_1}^d(dR_1)\big)}
$$
goes to $0$ exponentially fast by Cram\'er's theorem (see \cite{DZ2010}) as $d\to\infty$.

Now consider the case $\int_{\R}M_2(x)p_1(x)\,\dint x < R_2$. We have
$$
\frac{\vol_d\big(B_{M_1}^d(dR_1)\cap B_{M_2}^d(dR_2)\big)}  {\vol_d\big(B_{M_1}^d(dR_1)\big)}
=
1- \frac{ \E\Bigg[\mathbbm 1_{\big\{\sum_{i=1}^dY_i^{(1)}\leq 0 \big\}} \mathbbm 1_{\big\{\sum_{i=1}^dY_i^{(2)} > d[R_2 - \int_{\R}M_2(x)p_1(x)\,\dint x] \big\}} e^{-\alpha_*\sum_{i=1}^dY_i^{(1)}}\Bigg]}{ \E\Bigg[\mathbbm 1_{\big\{\sum_{i=1}^dY_i^{(1)}\leq 0 \big\}} e^{-\alpha_*\sum_{i=1}^dY_i^{(1)}}\Bigg]}.
$$
We have to show that the quotient of expectations on the right-hand side goes to $0$. To this end, observe that the expectation in the numerator can be estimated as follows:
$$
\E\Bigg[\mathbbm 1_{\big\{\sum_{i=1}^dY_i^{(1)}\leq 0 \big\}} \mathbbm 1_{\big\{\sum_{i=1}^dY_i^{(2)} > d[R_2 - \int_{\R}M_2(x)p_1(x)\,\dint x] \big\}} e^{-\alpha_*\sum_{i=1}^dY_i^{(1)}}\Bigg]
\leq
\Pro\Bigg[\sum_{i=1}^dY_i^{(2)} > d[R_2 - \int_{\R}M_2(x)p_1(x)\,\dint x]\Bigg],
$$
which goes to $0$ exponentially fast by Cram\'er's theorem (see \cite{DZ2010}) as $d\to\infty$. Applying to the expectation in the denominator the lower bound from the proof of Theorem~\ref{thm:log-volume orlicz}, we obtain that
$$
\frac{\vol_d\big(B_{M_1}^d(dR_1)\cap B_{M_2}^d(dR_2)\big)}  {\vol_d\big(B_{M_1}^d(dR_1)\big)}
$$
goes to $1$ exponentially fast.

Overall, the above yields the dichotomy
\[
\frac{\vol_d\big(B_{M_{1}}^d(dR_1) \cap B_{M_2}^d(dR_2)\big)}{\vol_d\big(B_{M_1}^d(dR_1)\big)} \stackrel{d\to\infty}{\longrightarrow}
\begin{cases}
0 & :\, \int_{\R}M_2(x)p_1(x)\,\dint x>R_2 \\
1 & :\, \int_{\R}M_2(x)p_1(x)\,\dint x<R_2,
\end{cases}
\]
which completes the proof.
\end{proof}


\section{The asymptotic volume ratio of $2$-concave Orlicz spaces}\label{sec:asymptotic volume ratio}

We are now going to determine the asymptotic volume ratio of $2$-concave Orlicz spaces, i.e., those defined by Orlicz functions where $M\circ \sqrt{|\cdot|}$ is a concave function on $\R$. This is, for instance, the case when $M(t)=|t|^p$ for $1\leq p \leq 2$. Since we already obtained the precise asymptotic volume of Orlicz balls, we merely need to determine the John ellipsoid, i.e., the maximum volume ellipsoid in $B_M^d(d)$. To do this, we recall from \cite[Section 16]{TJ1988} that a Banach space $X$ is said to have \textit{enough symmetries} if the only linear operators that commute with every isometry of $X$ are multiples of the identity. If $X$ is $d$-dimensional and has enough symmetries, it is known that $\mathscr{E}_X$ is a suitable multiple of the Euclidean unit ball of the same dimension. More precisely,
\begin{equation}\label{eq:JohnEllipsoid}
\mathscr{E}_X = \big\|{\rm id}:\ell_2^d\to X\big\|^{-1}\mathbb{B}_2^d,
\end{equation}
where $\ell_2^d$ is the $d$-dimensional Euclidean space with the Euclidean unit ball $\mathbb{B}_2^d$ and ${\rm id}:\ell_2^d\to X$ stands for the identity operator from $\ell_2^d$ to $X$ with the standard operator norm $\|{\rm id}:\ell_2^d\to X\|$ (see, e.g., \cite{DM2006}). Orlicz sequence spaces are Banach spaces with a $1$-symmetric basis (where the norm is invariant under permutations and signs) and have enough symmetries \cite{DM2006,TJ1988}.

Observe that by Lemma \ref{lem:equality of unit balls} the Banach space $\ell_{M/d}^d$ has unit ball
\[
B_M^d(d) = \Bigg\{ x=(x_1,\dots,x_d)\in\R^d\,:\, \frac{1}{d}\sum_{i=1}^d M(x_i) \leq 1 \Bigg\}.
\]
The following result contains the precise asymptotic volume ratio of the space $\ell_{M/d}^d$, when the defining Orlicz function $M$ is $2$-concave. 

\begin{proposition}
Let $M$ be a $2$-concave Orlicz function. Then, as $d\to\infty$, we have
\[
{\rm vr}\big(B_M^d(d)\big) \sim \frac{1}{\sqrt{2\pi e}M^{-1}(1)}e^{\varphi(\alpha_*) - \alpha_*},
\]
where $\varphi:(-\infty,0)\to \R$ is given by $\varphi(\alpha) = \log \int_{\R}e^{\alpha M(x)}\,\dint x$ and $\alpha_*<0$ is chosen in such a way that $\varphi'(\alpha_*) = 1$.
\end{proposition}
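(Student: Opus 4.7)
The plan is to leverage the fact that $\ell_{M/d}^d$, being a Banach space with a $1$-symmetric basis, has enough symmetries, and then to combine the John-ellipsoid identification \eqref{eq:JohnEllipsoid} with the asymptotic volume from Theorem~\ref{thm:log-volume orlicz} at $R=1$. By \eqref{eq:JohnEllipsoid}, the maximum volume ellipsoid inscribed in $B_M^d(d)$ is
\[
\mathscr{E}=\|\id:\ell_2^d\to\ell_{M/d}^d\|^{-1}\mathbb{B}_2^d,
\]
so that
\[
{\rm vr}\big(B_M^d(d)\big)=\left(\frac{\vol_d(B_M^d(d))}{\vol_d(\mathscr{E})}\right)^{1/d}.
\]
Everything beyond Theorem~\ref{thm:log-volume orlicz} therefore reduces to an exact formula for $\|\id:\ell_2^d\to\ell_{M/d}^d\|$ together with Stirling's asymptotics for $\vol_d(\mathbb{B}_2^d)$.

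The $2$-concavity of $M$ would enter as follows. Set $N(t):=M(\sqrt{t})$, which is concave on $[0,\infty)$ by hypothesis. Jensen's inequality then yields, for every $x\in\R^d$ with $\|x\|_2=1$ and every $\rho>0$,
\[
\frac{1}{d}\sum_{i=1}^{d} M(x_i/\rho)=\frac{1}{d}\sum_{i=1}^{d} N(x_i^2/\rho^2)\le N\left(\frac{1}{d\rho^2}\right)=M\left(\frac{1}{\rho\sqrt{d}}\right).
\]
Requiring the right-hand side to be at most $1$ forces $\rho\ge 1/(\sqrt{d}\,M^{-1}(1))$, while equality is attained at the diagonal unit vector $x=(1/\sqrt{d},\dots,1/\sqrt{d})$. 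By Lemma~\ref{lem:equality of unit balls} and the definition of the Luxemburg norm, this pins down
\[
\|\id:\ell_2^d\to\ell_{M/d}^d\|=\frac{1}{\sqrt{d}\,M^{-1}(1)},\qquad \mathscr{E}=\sqrt{d}\,M^{-1}(1)\,\mathbb{B}_2^d.
\]

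To conclude, combine the resulting identity $\vol_d(\mathscr{E})=(\sqrt{d}\,M^{-1}(1))^{d}\vol_d(\mathbb{B}_2^d)$ with the classical Stirling asymptotics $\vol_d(\mathbb{B}_2^d)^{1/d}\sim\sqrt{2\pi e/d}$ and with $\vol_d(B_M^d(d))^{1/d}\to e^{\varphi(\alpha_*)-\alpha_*}$, where $\alpha_*<0$ is determined by $\varphi'(\alpha_*)=1$ (Theorem~\ref{thm:log-volume orlicz} applied at $R=1$). Dividing and taking the $d$-th root produces exactly the target limit $\frac{1}{\sqrt{2\pi e}\,M^{-1}(1)}\,e^{\varphi(\alpha_*)-\alpha_*}$.

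The step I expect to be most delicate is the sharp evaluation of $\|\id:\ell_2^d\to\ell_{M/d}^d\|$: the Jensen upper bound is immediate, but one must verify carefully that $2$-concavity is exactly the hypothesis which allows the equalized vector to saturate the Luxemburg norm uniformly over the Euclidean sphere. Once this is in place, together with the structural observation (invoked via \cite{TJ1988,DM2006}) that enough symmetries force the maximum-volume inscribed ellipsoid of a $1$-symmetric Banach space to be a multiple of $\mathbb{B}_2^d$, the remainder is just book-keeping on top of Theorem~\ref{thm:log-volume orlicz}.
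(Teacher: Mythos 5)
Your proposal is correct and follows essentially the same route as the paper: enough symmetries plus \eqref{eq:JohnEllipsoid} to identify the John ellipsoid, the Jensen/$2$-concavity argument with the diagonal vector to evaluate $\|\id:\ell_2^d\to\ell_{M/d}^d\|=1/(\sqrt{d}\,M^{-1}(1))$, and then Stirling together with the volume asymptotics of $B_M^d(d)$ at $R=1$. The only (immaterial) difference is that the paper plugs in the precise asymptotics of Proposition~\ref{prop:asymptotic_volume} before taking $d$-th roots, whereas you use the $d$-th-root convergence from Theorem~\ref{thm:log-volume orlicz} directly; the prefactor washes out either way.
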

\begin{proof}
In view of \eqref{eq:JohnEllipsoid}, Proposition \ref{prop:asymptotic_volume}, and the fact that
\[
\vol_d\big(\B_2^d\big)
= \frac{\sqrt{\pi}^d}{\Gamma(1+d/2)} \stackrel{d\to\infty}{\sim} \frac{1}{\sqrt{d\pi}} \Bigg(\frac{2\pi e}{d}\Bigg)^{d/2},
\]
we merely need to compute the operator norm of the natural embedding of $\ell_2^d$ into $\ell_{M/d}^d$. Note that
\begin{align*}
\|\id:\ell_2^d\to \ell_{M/d}^d\|
=  \sup_{(x_1,\dots,x_d)\in \SSS^{d-1}_2} \|(x_1,\dots,x_d)\|_{M/d}
 = \sup_{(x_1,\dots,x_d)\in \SSS^{d-1}_2}  \inf\Bigg\{\rho\in(0,\infty)\,:\, \sum_{i=1}^d \frac{1}{d}M\Big(\frac{x_i}{\rho}\Big) \leq 1 \Bigg\}.
\end{align*}
Now let us assume that $(x_1,\dots,x_d)\in\SSS_2^{d-1}$. Then it follows from the concavity of $M\circ\sqrt{|\cdot|}:\R\to[0,\infty)$ that
\[
\sum_{i=1}^d \frac{1}{d}M\Big(\frac{x_i}{\rho}\Big) = \sum_{i=1}^d \frac{1}{d}(M\circ\sqrt{|\cdot|})\Big(\frac{x_i^2}{\rho^2}\Big) \leq (M\circ\sqrt{|\cdot|})\Bigg(\sum_{i=1}^d \frac{x_i^2}{d\rho^2}\Bigg) = (M\circ\sqrt{|\cdot|})\Big(\frac{1}{d\rho^2}\Big) = M\Big(\frac{1}{\sqrt{d}\rho}\Big).
\]
This means that
\[
\|\id:\ell_2^d\to \ell_{M/d}^d\| \leq \sup_{(x_1,\dots,x_d)\in \SSS^{d-1}_2}  \inf\Bigg\{\rho\in(0,\infty)\,:\,  M\Big(\frac{1}{\sqrt{d}\rho}\Big) \leq 1 \Bigg\} = \frac{1}{\sqrt{d}M^{-1}(1)}.
\]
The lower bound follows from considering the vector $x_0 = (1/\sqrt{d},\dots,1/\sqrt{d})\in\R^d$, namely
\[
\|\id:\ell_2^d\to \ell_{M/d}^d\| \geq \big\|(1/\sqrt{d},\dots,1/\sqrt{d})\big\|_{M/d} = \frac{1}{\sqrt{d}}\|(1,\dots,1)\|_{M/d} = \frac{1}{\sqrt{d}M^{-1}(1)}.
\]
Therefore, we have $\|\id:\ell_2^d\to \ell_{M/d}^d\|^{-1} = \sqrt{d}M^{-1}(1)$ and the John ellipsoid in $\ell_{M/d}^d$ is hence given by
\[
\mathscr E_M := \sqrt{d}M^{-1}(1)\,\B_2^d.
\]
This means that, as $d\to\infty$,
\begin{align*}
{\rm vr}\big(B_M^d(d)\big) & = \frac{1}{\sqrt{d}M^{-1}(1)}\, \frac{\vol_d\big(B_M^d(d)\big)^{1/d}}{\vol_d\big(\B_2^d\big)^{1/d}} \sim \frac{1}{\sqrt{d}M^{-1}(1)}\, \frac{\Bigg(\frac{1}{|\alpha_*| \sqrt{2\pi d\, \sigma_*^2}  }e^{d[\varphi(\alpha_*)-\alpha_*] }\Bigg)^{1/d}}{\Bigg(\frac{1}{\sqrt{d\pi}} \Bigg(\frac{2\pi e}{d}\Bigg)^{d/2}\Bigg)^{1/d}} \cr
& \sim \frac{\sqrt{d}}{\sqrt{2\pi e}\sqrt{d}M^{-1}(1)}e^{\varphi(\alpha_*) - \alpha_*} = \frac{1}{\sqrt{2\pi e}M^{-1}(1)}e^{\varphi(\alpha_*) - \alpha_*},
\end{align*}
which completes the proof.
\end{proof}

\subsection*{Acknowledgement}
ZK has been supported by the German Research Foundation under Germany’s Excellence Strategy EXC 2044 – 390685587, Mathematics M\"unster: Dynamics - Geometry - Structure. JP is supported by the Austrian Science Fund (FWF) Project P32405 \textit{Asymptotic geometric analysis and applications}.
We thank Chiranjib Mukherjee for pointing out Reference~\cite{BS2016} to us.

\bibliographystyle{plain}
\bibliography{vio}

\begin{thebibliography}{10}

\bibitem{BRR1960}
R.~R. Bahadur and R.~R. Rao.
\newblock On deviations of the sample mean.
\newblock {\em Ann. Math. Statist.}, 31:1015--1027, 1960.

\bibitem{BS2016}
V.~Bakhtin and E.~Sokal.
\newblock The {K}ullback–{L}eibler information function for infinite
  measures.
\newblock {\em Entropy}, 18(12):448, 2016.

\bibitem{BartheGuedonEtAl}
F.~Barthe, O.~Gu{\'e}don, S.~Mendelson, and A.~Naor.
\newblock A probabilistic approach to the geometry of the {$\ell^n_p$}-ball.
\newblock {\em Ann. Probab.}, 33(2):480--513, 2005.

\bibitem{BM1987}
J.~Bourgain and V.D. Milman.
\newblock New volume ratio properties for convex symmetric bodies in {$\mathbb
  {R}^n$}.
\newblock {\em Invent. Math.}, 88:319--340, 1987.

\bibitem{C1938}
H.~{Cram{\'e}r}.
\newblock {Sur un nouveau th\'eor\`eme-limite de la th\'eorie des
  probabilit\'es'}.
\newblock {\em Actualit\'es Scientifiques et Industrielles 736}, pages {5--23},
  {1938}.

\bibitem{DM2006}
A.~Defant and C.~Michels.
\newblock Norms of tensor product identities.
\newblock {\em Note Mat.}, 25(1):129--166, 2005/06.

\bibitem{DP2009}
A.~Defant and C.~Prengel.
\newblock Volume estimates in spaces of homogeneous polynomials.
\newblock {\em Math. Z.}, 261(4):909--932, 2009.

\bibitem{DZ2010}
A.~Dembo and O.~Zeitouni.
\newblock {\em {Large Deviations Techniques and Applications}}.
\newblock Stochastic Modelling and Applied Probability. Springer-Verlag Berlin
  Heidelberg, second edition, 2010.

\bibitem{D1839}
P.G.L Dirichlet.
\newblock Sur une nouvelle m\'ethode pour la d\'etermination des int\'egrales
  multiples.
\newblock {\em J. Math. Pures Appl.}, 4:164--168, 1839.

\bibitem{E2006}
R.~S. Ellis.
\newblock {\em Entropy, large deviations, and statistical mechanics}.
\newblock Classics in Mathematics. Springer-Verlag, Berlin, 2006.
\newblock Reprint of the 1985 original.

\bibitem{GPSTJW2017}
O.~Giladi, J.~Prochno, C.~Sch\"{u}tt, N.~Tomczak-Jaegermann, and E.~Werner.
\newblock On the geometry of projective tensor products.
\newblock {\em J. Funct. Anal.}, 273(2):471--495, 2017.

\bibitem{HKTJ2006}
C.~Hao, A.~Kami\'{n}ska, and N.~Tomczak-Jaegermann.
\newblock Orlicz spaces with convexity or concavity constant one.
\newblock {\em J. Math. Anal. Appl.}, 320(1):303--321, 2006.

\bibitem{KPT2020_volume_ratio}
Z.~Kabluchko, J.~Prochno, and C.~Th\"{a}le.
\newblock Exact asymptotic volume and volume ratio of {S}chatten unit balls.
\newblock {\em J. Approx. Theory (to appear 2020+)}.

\bibitem{KPT2020_intersections}
Z.~Kabluchko, J.~Prochno, and C.~Th\"{a}le.
\newblock Intersection of unit balls in classical matrix ensembles.
\newblock {\em Israel J. Math. (to appear 2020+)}.

\bibitem{KPT2019_I}
Z.~Kabluchko, J.~Prochno, and C.~Th\"{a}le.
\newblock High-dimensional limit theorems for random vectors in
  {$\ell_p^n$}-balls.
\newblock {\em Commun. Contemp. Math.}, 21(1):1750092, 30, 2019.

\bibitem{KPT2019_II}
Z.~Kabluchko, J.~Prochno, and C.~Th\"{a}le.
\newblock High-dimensional limit theorems for random vectors in
  {$\ell_p^n$}-balls. {II}.
\newblock {\em Commun. Contemp. Math. (to appear)}, 2020+.

\bibitem{K1984}
A.~Kami\'{n}ska.
\newblock The criteria for local uniform rotundity of {O}rlicz spaces.
\newblock {\em Studia Math.}, 79(3):201--215, 1984.

\bibitem{KLR2019}
S.S. {Kim}, Y.-T. {Liao}, and K.~{Ramanan}.
\newblock {An asymptotic thin shell condition and large deviations for random
  multidimensional projections}.
\newblock {\em arXiv e-prints}, page arXiv:1912.13447, December 2019.

\bibitem{KMW2011}
P.~Kosmol and D.~M\"{u}ller-Wichards.
\newblock {\em Optimization in function spaces}, volume~13 of {\em De Gruyter
  Series in Nonlinear Analysis and Applications}.
\newblock Walter de Gruyter \& Co., Berlin, 2011.
\newblock With stability considerations in Orlicz spaces.

\bibitem{KR1961}
M.~A. Krasnosel'ski\u{\i} and Ja.~B. Ruticki\u{\i}.
\newblock {\em Convex functions and {O}rlicz spaces}.
\newblock Translated from the first Russian edition by Leo F. Boron. P.
  Noordhoff Ltd., Groningen, 1961.

\bibitem{KS1985}
S.~Kwapie\'{n} and C.~Sch\"{u}tt.
\newblock Some combinatorial and probabilistic inequalities and their
  application to {B}anach space theory.
\newblock {\em Studia Math.}, 82(1):91--106, 1985.

\bibitem{LR2020}
Y.-T. {Liao} and K.~{Ramanan}.
\newblock {Geometric sharp large deviations for random projections of
  $\ell_p^n$ spheres}.
\newblock {\em arXiv e-prints}, page arXiv:2001.04053, January 2020.

\bibitem{Lux1955}
W.~A.~J. Luxemburg.
\newblock {\em Banach function spaces}.
\newblock Thesis, Technische Hogeschool te Delft, 1955.

\bibitem{P1965}
V.~V. Petrov.
\newblock On the probabilities of large deviations for sums of independent
  random variables.
\newblock {\em Teor. Verojatnost. i Primenen}, 10:310--322, 1965.

\bibitem{PS2012}
J.~Prochno and C.~Sch\"{u}tt.
\newblock Combinatorial inequalities and subspaces of {$L_1$}.
\newblock {\em Studia Math.}, 211(1):21--39, 2012.

\bibitem{PTT2020}
J.~{Prochno}, C.~{Th{\"a}le}, and N.~{Turchi}.
\newblock Geometry of $\ell_p^n$-balls:\,\,classical results and recent
  developments.
\newblock In {\em Progress in Probability}, High Dimensional Probability VIII.
  Birkh{\"a}user, 2020.

\bibitem{RR1991}
S.T. Rachev and L.~R{\"u}schendorf.
\newblock Approximate independence of distributions on spheres and their
  stability properties.
\newblock {\em Ann. Probab.}, 19(3):1311--1337, 1991.

\bibitem{RAS2015}
F.~Rassoul-Agha and T.~Sepp\"{a}l\"{a}inen.
\newblock {\em A course on large deviations with an introduction to {G}ibbs
  measures}, volume 162 of {\em Graduate Studies in Mathematics}.
\newblock American Mathematical Society, Providence, RI, 2015.

\bibitem{RS1988}
Y.~Raynaud and C.~Sch\"{u}tt.
\newblock Some results on symmetric subspaces of {$L_1$}.
\newblock {\em Studia Math.}, 89(1):27--35, 1988.

\bibitem{SS1991}
G.~Schechtman and M.~Schmuckenschl\"ager.
\newblock Another remark on the volume of the intersection of two {$L^n_p$}
  balls.
\newblock In {\em Geometric aspects of functional analysis (1989--90)}, volume
  1469 of {\em Lecture Notes in Math.}, pages 174--178. Springer, Berlin, 1991.

\bibitem{SchechtmanZinn}
G.~Schechtman and J.~Zinn.
\newblock On the volume of the intersection of two {$L^n_p$} balls.
\newblock {\em Proc. Amer. Math. Soc.}, 110(1):217--224, 1990.

\bibitem{Schmu2001}
M.~Schmuckenschl\"ager.
\newblock C{LT} and the volume of intersections of {$l^n_p$}-balls.
\newblock {\em Geom. Dedicata}, 85(1-3):189--195, 2001.

\bibitem{Sch1982}
C.~Sch\"utt.
\newblock On the volume of unit balls in banach spaces.
\newblock {\em Compositio Math.}, 47(3):393--407, 1982.

\bibitem{S1995}
C.~Sch\"{u}tt.
\newblock On the embedding of {$2$}-concave {O}rlicz spaces into {$L^1$}.
\newblock {\em Studia Math.}, 113(1):73--80, 1995.

\bibitem{S1978}
S.~Szarek.
\newblock On {K}ashin's almost {E}uclidean orthogonal decomposition of
  {$l_n^1$}.
\newblock {\em Bull. Acad. Pol. Sci., {S\'er}. Sci. Math. Astron. Phys.},
  26(8):691--694, 1978.

\bibitem{ST1980}
S.~Szarek and N.~Tomczak-Jaegermann.
\newblock On nearly {E}uclidean decomposition for some classes of {B}anach
  spaces.
\newblock {\em Compositio Math.}, 40(3):367--385, 1980.

\bibitem{TJ1988}
N.~Tomczak-Jaegermann.
\newblock {\em {B}anach-{M}azur {D}istances and {F}inite-{D}imensional
  {O}perator {I}deals}, volume~38 of {\em Pitman Monographs and Surveys in Pure
  and Applied Mathematics}.
\newblock Longman, Harlow; Wiley, New York, 1989.

\end{thebibliography}

\bigskip
\bigskip
	
	\medskip
	
	\small

	\noindent \textsc{Zakhar Kabluchko:} Faculty of Mathematics, University of M\"unster, Orl\'eans-Ring 10,
		48149 M\"unster, Germany
		
	\noindent
		{\it E-mail:} \texttt{zakhar.kabluchko@uni-muenster.de}
	
		\medskip
	
	\noindent \textsc{Joscha Prochno:} Institute of Mathematics and Scientific Computing,
	University of Graz, Heinrichstrasse 36, 8010 Graz, Austria
	
	\noindent
	{\it E-mail:} \texttt{joscha.prochno@uni-graz.at}

\end{document}